\theoremstyle{plain}
\numberwithin{equation}{section}
\newtheorem{theorem}{Theorem}[section]
\newtheorem{lemma}[theorem]{Lemma}
\newtheorem{definition-lemma}[theorem]{Definition-Lemma}
\newtheorem{proposition}[theorem]{Proposition}
\newtheorem{example}[theorem]{Example}
\newtheorem{remark}[theorem]{Remark}
\newcommand{\id}         {{\mathrm {Id}}}
\newcommand{\Aut}        {{\mathrm {Aut}}}
\newcommand{\Ad}         {{\mathrm {Ad}}}
\newcommand{\pr}         {{\mathrm{pr}}}
\newcommand{\Diff}       {\mathrm{Diff}}
\newcommand{\Der}        {\mathrm{Der}}
\newcommand{\Sym}        {\mathrm{Sym}}
\newcommand{\sym}        {\mathrm{sym}}
\newcommand{\Grd}        {\mathcal{G}}
\newcommand{\SP} [1]     {{\left\langle {{#1}} \right\rangle}}
\newcommand{\frakg}     {\mathfrak{g}}
\newcommand{\sour}        {\mathsf{s}}
\newcommand{\tar}         {{\mathsf{t}}}
\newcommand{\Cour}[1]      {[\![#1]\!]}
\newcommand{\Lie}        {\mathcal L}
\newcommand{\dd}     {\mathrm{d}}
\begin{document}
\title[]
{Symmetries and reduction of multiplicative 2-forms}

\author[]{Henrique Bursztyn and  Alejandro Cabrera}

\address{Instituto de Matem\'atica Pura e Aplicada,
Estrada Dona Castorina 110, Rio de Janeiro, 22460-320, Brasil }
\email{henrique@impa.br, cabrera@impa.br}

\address{Departamento de Matem\'atica Aplicada,
Instituto de Matem\'atica, Universidade Federal do Rio de Janeiro,
CEP 21941-909, Rio de Janeiro - RJ, Brazil} \email{cabrera@impa.br}

\date{}

\maketitle

\begin{abstract}
This paper is concerned with symmetries of closed multiplicative
2-forms on Lie groupoids and their infinitesimal counterparts. We
use them to study Lie group actions on Dirac manifolds by Dirac
diffeomorphisms and their lifts to presymplectic groupoids, building
on recent work of Fernandes-Ortega-Ratiu \cite{FOR} on Poisson
actions.
\end{abstract}

\begin{center}
{\it Dedicated to Tudor Ratiu on the occasion of his 60th birthday}
\end{center}

\tableofcontents

\section{Introduction}\label{sec:intro}

This paper studies symmetries of closed multiplicative 2-forms on
Lie groupoids as well as their infinitesimal counterparts. This
study leads to an extension of some of the recent work of
Fernandes-Ortega-Ratiu \cite{FOR} on Poisson actions and their
Hamiltonian lifts to symplectic groupoids to the framework of Dirac
structures.

Consider a Lie group $G$ acting on a Poisson manifold $M$ by Poisson
diffeomorphisms, and suppose that $\Grd$ is the
source-simply-connected symplectic groupoid integrating $M$ (see
e.g. \cite{CDW}). A key observation that can be traced back to
\cite{MiWe} is that, even when the $G$-action on $M$ does not admit
a momentum map, it can always be lifted to a Hamiltonian $G$-action
on $\Grd$ with a momentum map $J: \Grd\to \frakg^*$ suitably
compatible with the groupoid structure (in the sense of
\eqref{eq:Jcocycle} below).

In \cite{FOR}, Fernandes, Ortega and Ratiu use this canonical
momentum map on $\Grd$ as a tool for studying actions on $M$ by
Poisson diffeomorphisms. For example, when the action is free and
proper, so that the quotient $M/G$ is again a Poisson manifold, they
show that the Marsden-Weinstein quotient $J^{-1}(0)/G$ is a
symplectic groupoid for $M/G$ (though not necessarily the
source-simply-connected one). The simplest instance of this result
is when $M$ is an arbitrary manifold (equipped with the zero Poisson
structure), in which case one just recovers the well-known fact that
any $G$-action on $M$, when lifted to $T^*M$, is Hamiltonian with
momentum map 
\begin{equation}\label{eq:Jcan0}
J_{can}:T^*M \to \frak{g}^*,\;\;\;
\SP{J_{can}(\alpha),u}=\alpha(u_M),
\end{equation}
where $u\in \frak{g}$ and $u_M$ is the infinitesimal generator for
the $G$-action on $M$. The reduction of $T^*M$ at level zero is
$T^*(M/G)$, which is just the symplectic groupoid of $M/G$. Our goal
in this note is to place some of these results from \cite{FOR} in
the broader context of Dirac structures \cite{courant}, offering an
alternative viewpoint based on methods from \cite{BuCa,BCO} and
pointing out additional subtleties that arise in this generality.

Our starting point is \cite{bcwz}, where the global objects
associated with Dirac manifolds are identified; these objects are
referred to as \textit{presymplectic groupoids}, and their role in
the theory of Dirac structures is entirely analogous to that of
symplectic groupoids in Poisson geometry. A presymplectic groupoid
is a Lie groupoid equipped with a closed multiplicative 2-form but,
in contrast with symplectic groupoids, this 2-form may be degenerate
(though in a ``controlled'' way). Let $(\Grd,\omega)$ be a
source-simply-connected presymplectic groupoid integrating a Dirac
structure $L$ on $M$, and suppose that a Lie group $G$ acts on $M$
by Dirac diffeomorphisms. We observe that, just as in the case of
Poisson actions, the action on $M$ lifts to a Hamiltonian action on
$\Grd$, in the sense that there is a $G$-equivariant map $J:\Grd\to
\frakg^*$ satisfying
\begin{equation}\label{eq:mmapcond}
i_{u_{\Grd}}\omega = -\dd \SP{J,u}, \;\;\; \forall u \in \frak{g},
\end{equation}
where $u_{\Grd}$ is the infinitesimal generator associated with $u$
for the $G$-action on $\Grd$. Note, however, that condition
\eqref{eq:mmapcond} does not necessarily determine the infinitesimal
action since $\omega$ may be degenerate.

When the $G$-action on $M$ is free and proper, the quotient $M/G$ is
a smooth manifold that inherits (with the aid of extra regularity
conditions to be specified in Section~\ref{sec:dirac}) natural
geometrical structures: on the one hand, a Marsden-Weinstein type
reduction of $\Grd$ produces a Lie groupoid $\Grd_{red}$ over $M/G$
equipped with a closed multiplicative 2-form; on the other hand, the
$G$-invariant Dirac structure $L$ on $M$ may be pushed forward to a
Dirac structure $L_{quot}$ on $M/G$. Unlike the case of Poisson
manifolds, however, $\Grd_{red}$ is generally \textit{not} a
presymplectic groupoid for $(M/G,L_{red})$ (in the terminology of
\cite[Sec.~4]{bcwz}, $\Grd_{red}$ will be proven to be an
\textit{over}-presymplectic groupoid). As we will discuss, it may
happen that the quotient $(M/G,L_{quot})$ does not admit any
presymplectic groupoid at all; i.e., $L_{quot}$ may not be
integrable as a Lie algebroid, despite the fact that $L$ was. Our
approach to Dirac structures and presymplectic groupoids is through
the notion of \textit{IM 2-forms} \cite{bcwz} (see also
\cite{AC,BuCa,BCO}), differently from \cite{FOR} that uses path
spaces \cite{CF} (see also \cite{catfel,severa}). At the end of the
paper, we discuss how to reconcile both viewpoints.

The increasing role of Dirac structures in mechanical problems has
been one of our motivations to revisit the work in \cite{FOR} from
this broader perspective. Even in the context of Poisson geometry,
Dirac structures naturally appear in the stratified geometry of
non-free Poisson actions, see Sections~2.3 and 4.3 of \cite{FOR}. A
more complete picture, that we leave for future work, should include
proper actions on Dirac manifolds which are not necessarily free;
Jotz, Ratiu and Sniatycki have begun this study in \cite{JR,JRS}.
Another source of inspiration for the present note is the
possibility of using Dirac structures as a tool for studying
symplectic groupoids of Poisson homogeneous spaces \cite{Lu}, yet to
be explored.

The paper is structured as follows: in Section \ref{sec:setup}, we
recall IM 2-forms on Lie algebroids and present the geometric set-up
that will be considered in the paper; Section \ref{sec:momentum}
discusses the Hamiltonian properties of actions on Lie groupoids
that lift symmetries of closed IM 2-forms; Section
\ref{sec:infreduction} describes the reduction of closed IM 2-forms,
while Section \ref{sec:globalred} deals with its global version,
i.e., reduction of closed multiplicative 2-forms; the particular
situation of Dirac structures is treated in Section \ref{sec:dirac};
in Section \ref{sec:pathspaces}, we relate the approach in this
paper to the viewpoint of path spaces in \cite{FOR}.

It is a great pleasure to dedicate this note to Tudor, whose work
has been central in so many aspects of Poisson geometry and
geometric mechanics, including many of the issues discussed here. We
are thankful to him for his mathematical insights, unmatched
enthusiasm, and constant support.

\subsection*{Acknowledgments} We thank Rui L. Fernandes for helpful advice
and Juan-Pablo Ortega for encouraging the writing of this note. We
also thank the referees for valuable comments. Bursztyn's research
was supported by CNPq and Faperj. Cabrera thanks the University of
Toronto and IMPA for their hospitality while this work was being
completed and CNPq for financial support.


\section{Preliminaries}\label{sec:setup}

\subsection{Symmetries of vector bundles}\label{subsec:symm}
Let $M$ be a smooth manifold. For a smooth vector bundle $E\to M$,
$\Aut(E)$ denotes its group of vector-bundle automorphisms. An
automorphism of $E$ covers a diffeomorphism of $M$, so there is a
homomorphism $\Aut(E)\to \Diff(M)$. Any $\Phi\in \Aut(E)$, covering
$\varphi\in \Diff(M)$, acts on the space of sections of $E$ by
\begin{equation}\label{eq:Psi*}
\Phi_*: \Gamma(E)\to \Gamma(E),\;\; \Phi_*(s) = \Phi\circ s\circ
\varphi^{-1}.
\end{equation}

Let $G$ be a Lie group with Lie algebra $\frakg$. In this paper, an
action of $G$ on $E$ is always assumed to be by vector-bundle
automorphisms, i.e., defined by a group homomorphism $G\to \Aut(E)$;
any $G$-action on $E$ naturally covers, and it is said to be a
\textit{lift} of, an action of $G$ on $M$.

The space of infinitesimal automorphisms of $E$, denoted by
$\Der(E)$, is the set of pairs $(X,D)$, where $X\in \mathfrak{X}(M)$
is a vector field on $M$ and $D: \Gamma(E)\to \Gamma(E)$ is a linear
endomorphism such that
\begin{equation}\label{eq:Deq}
D(fs)=(\Lie_Xf)s + fD(s), \;\;\; \forall f\in C^\infty(M),\, s\in
\Gamma(E).
\end{equation}
For a one-parameter family of automorphisms $\Phi_t$ in $\Aut(E)$,
$\Phi_0=Id$, the corresponding infinitesimal automorphism is given
by $D(s) = \frac{d}{dt}\big |_{t=0} (\Phi_t)_*(s)$. The space
$\Der(E)$ has a natural Lie algebra structure, given by commutators,
for which the projection $\Der(E)\to \mathfrak{X}(M)$, $(X,D)\mapsto
X$, is a Lie algebra homomorphism. Any $G$-action on $E$ gives rise
to a Lie algebra homomorphism $\frakg\to \Der(E)$ for which the
composition $\frakg\to \Der(E)\to \mathfrak{X}(M)$ agrees with the
infinitesimal counterpart of the $G$-action on $M$ covered by the
$G$-action on $E$.

Suppose $G$ acts on $M$ freely and properly; let $p:M\to M/G$ be the
natural projection, which is a surjective submersion. Any lift of
this action to a $G$-action on $E$ determines a unique vector bundle
(up to isomorphism) over $M/G$, denoted by
\begin{equation}\label{eq:EG}
E/G \to M/G,
\end{equation}
such that the pull-back bundle $p^*(E/G)$ is naturally identified
with $E$, and the pull-back of sections identifies $\Gamma(E/G)$
with the space of $G$-invariant sections $\Gamma(E)^G$.

\subsection{The geometric set-up} \label{subsec:setup}
This paper will be concerned with the following geometric objects:

\begin{itemize}

\item[$(i)$] A smooth manifold $M$ acted upon by a Lie group $G$.

\item[$(ii)$] A Lie algebroid $A\to M$, with Lie bracket $[\cdot,\cdot]$ on
$\Gamma(A)$ and anchor map $\rho:A\to TM$, equipped with a closed IM
2-form \cite{bcwz} $\mu:A\to T^*M$.

\item[$(iii)$] An action of $G$ on $A$ by Lie algebroid automorphisms which lifts the
$G$-action on $M$ and for which $\mu:A\to T^*M$ is $G$-equivariant.

\end{itemize}

Let us briefly recall the objects that appear in $(ii)$ and $(iii)$.

The notion of \textit{closed IM 2-form} \cite{bcwz} in $(ii)$ refers
to a morphism of vector bundles $\mu: A\to T^*M$, covering the
identity map, satisfying
\begin{align}\label{eq:IM1}
&i_{\rho(a)}\mu(b)=-i_{\rho(b)}\mu(a)\\
&\mu([a,b])=\Lie_{\rho(a)}\mu(b)-i_{\rho(b)}d\mu(a) =
\Lie_{\rho(a)}\mu(b)-\Lie_{\rho(b)}\mu(a) + \dd
i_{\rho(b)}\mu(a).\label{eq:IM2}
\end{align}

In $(iii)$, we consider the group of \textit{automorphisms} of the
Lie algebroid $A$, i.e., the subgroup of vector-bundle automorphisms
$\Aut(A)$ defined by
\begin{equation}\label{eq:SYM}
\Sym(A)=\{\Phi:A\to A \in \Aut(A)\,|\,
\Phi_*([a,b])=[\Phi_*(a),\Phi_*(b)] \, \forall\, a,b\in \Gamma(A)\}.
\end{equation}
One can directly verify that any $\Phi \in \Sym(A)$, covering
$\varphi \in \Diff(M)$, satisfies
\begin{equation}\label{eq:anchor}
\rho \circ \Phi = \dd\varphi \circ \rho,
\end{equation}
as a result of the identity $\Phi_*([a,fb])=[\Phi_*(a),\Phi_*(fb)]$,
$\forall f\in C^\infty(M)$, and the Leibniz rule. The infinitesimal
Lie-algebroid automorphisms form a Lie subalgebra of $\Der(A)$ given
by
\begin{equation}\label{eq:sym}
\sym(A)=\{(X,D)\in \Der(A)\,|\, D([a,b])=[D(a),b] + [a,D(b)],
\forall a,b \in \Gamma(A)\}.
\end{equation}
In analogy with \eqref{eq:anchor}, any $(X,D)\in \sym(A)$ satisfies
\begin{equation}\label{eq:anchor2}
\rho(D(a)) = [X,\rho(a)], \;\;\; \forall a \in \Gamma(A);
\end{equation}
this can be verified directly via the identity $D([a,fb])=[D(a),fb]+
[a,D(fb)]$, $\forall f\in C^\infty(M)$, the Leibniz identity and
\eqref{eq:Deq}.

The $G$-action on $A$ in $(iii)$ is given by a group homomorphism
$$
G\to \Sym(A)\subseteq \Aut(A).
$$
The equivariance of $\mu$ in $(iii)$ is with respect to the
canonical lift of the $G$-action on $M$ to $T^*M$. The $G$-action on
$A$ gives rise to an infinitesimal $\frakg$-action on $A$, defined
by a Lie algebra homomorphism $\frakg\to \sym(A) \subseteq \Der(A)$
necessarily of the form
\begin{equation}\label{eq:infaction}
u \mapsto (u_M, D),
\end{equation}
where $u_M$ is the infinitesimal generator of the $G$-action on $M$
defined by $u\in \frakg$. Infinitesimally, the equivariance of $\mu$
becomes
\begin{equation}\label{eq:equiv}
\Lie_{u_M}\mu(a) = \mu (D(a)),\;\;\; \forall a\in \Gamma(A),
\end{equation}
for all $(u_M,D)\in \sym(A)$ as in \eqref{eq:infaction}.

A central theme in this paper is transferring the geometric
information in $(i)$, $(ii)$, $(iii)$ to a Lie groupoid $\Grd$
integrating $A$. Our notation and conventions for a Lie groupoid
$\Grd$ over $M$ are as follows: the source and target maps are
denoted by $\sour$ and $\tar$, the set $\Grd^{(2)}\subset \Grd\times
\Grd$ of composable pairs $(g,h)$ is defined by the condition
$\sour(g)=\tar(h)$, and the multiplication is denoted by
$m:\Grd^{(2)}\to \Grd$, $m(g,h)=gh$; the unit map $M \hookrightarrow
\Grd$ is used to identify $M$ with its image in $\Grd$. The Lie
algebroid of $\Grd$ is $A\Grd=\ker(\dd \sour)|_M$, with anchor
$\dd\tar|_A:A\to M$ and bracket induced by right-invariant vector
fields.

\subsection{Dirac structures} \label{subsec:dirac}
The main example of the set-up $(i)$, $(ii)$, $(iii)$ that we will
have in mind is given as follows. Consider the vector bundle
$\mathbb{T}M:= TM\oplus T^*M$ over $M$ equipped with the
nondegenerate symmetric fibrewise bilinear form given at each $x\in
M$ by
\begin{equation}\label{eq:pair}
\SP{(X,\alpha),(Y,\beta)}=\beta(X)+ \alpha(Y),\;\; X,Y \in T_xM,\; \alpha,\beta \in T_x^*M,
\end{equation}
and the Courant bracket $\Cour{\cdot,\cdot}:\Gamma(\mathbb{T}M)\times \Gamma(\mathbb{T}M) \to
\Gamma(\mathbb{T}M)$,
\begin{equation}\label{eq:cour}
 \Cour{(X,\alpha),(Y,\beta)}=([X,Y],\Lie_X\beta - i_Y\dd\alpha).
\end{equation}
We denote by $\pr_{T}: \mathbb{T}M\to TM$ and
$\pr_{T^*}:\mathbb{T}M\to T^*M$ the canonical projections.

A \textit{Dirac structure} \cite{courant} on $M$ is a vector
subbundle $L\subset \mathbb{T}M$ satisfying $L=L^\perp$ with respect
to $\SP{\cdot,\cdot}$, and which is involutive with respect to
$\Cour{\cdot,\cdot}$, i.e., $\Cour{\Gamma(L),\Gamma(L)}\subseteq
\Gamma(L)$. Any Poisson structure $\pi \in \Gamma(\wedge^2TM)$ may
be viewed as a Dirac structure via its graph, i.e.,
\begin{equation*}\label{eq:Lpi}
L=\{(\pi^\sharp(\alpha),\alpha)\,|\, \alpha\in T^*M\},
\end{equation*}
where $\pi^\sharp: T^*M\to TM$ is given by
$\pi^\sharp(\alpha)=i_\alpha\pi$; closed 2-forms may be viewed as
Dirac structures analogously, as graphs of the associated bundle
maps $TM\to T^*M$.

For a Dirac structure $L$ on $M$, the vector bundle $L\to M$
inherits a Lie algebroid structure, with bracket on $\Gamma(L)$
given by the restriction of $\Cour{\cdot,\cdot}$  and anchor given
by restriction of $\pr_T$ to $L$:
\begin{equation}\label{eq:LAstr}
[\cdot,\cdot]_L = \Cour{\cdot,\cdot}|_{\Gamma(L)},\;\;\; \rho_L :=
\pr_T|_L: L\to TM.
\end{equation}
Moreover, the map
\begin{equation}\label{eq:diracIM}
\mu_L:= \pr_{T^*} |_L : L \to T^*M
\end{equation}
is a closed IM 2-form\footnote{Conversely, given any Lie algebroid
$A$ and closed IM 2-form $\mu:A\to T^*M$, we may consider the image
of the map $A\to \mathbb{T}M$, $a\mapsto (\rho(a),\mu(a))$; if
$\mathrm{rank}(A)=\dim(M)$ and $\ker(\rho)\cap \ker(\mu)=\{0\}$,
then this map is an isomorphism from $A$ onto its image, which is
then a Dirac structure on $M$.}.

A diffeomorphism $\varphi:M\to M$ is said to preserve a Dirac
structure $L$ if its lift
\begin{equation}\label{eq:lift}
 (\dd\varphi,(\dd\varphi^{-1})^*):\mathbb{T}M\to \mathbb{T}M
\end{equation}
preserves $L$; equivalently, the following holds for all $x\in M$:
$$
L_{\varphi(x)}=\{ (\dd\varphi(X),(\dd\varphi^{-1})^*(\alpha))\,|\,
(X,\alpha) \in L_x \}.
$$
In particular, $\varphi$ is both a backward and forward Dirac map
(see e.g. \cite[Sec.~2.1]{bcwz} and references therein).

The situation that will concern us is that of a  Lie group $G$
acting, freely and properly, on a manifold $M$ by diffeomorphisms
preserving a Dirac structure $L\subset \mathbb{T}M$. This fits into
the framework of $(i)$, $(ii)$, $(iii)$ if we set $A=L$, $\mu=\mu_L$
as in \eqref{eq:diracIM}, and the $G$-action on $A$ to be the
natural lift of the $G$-action on $M$ via \eqref{eq:lift} restricted
to $L$. When $M$ is a Poisson manifold, the Lie algebroid in
question is naturally identified with $T^*M$, in such a way that
$\mu_L$ is just the identity map; in this case, the information in
$(i)$, $(ii)$, $(iii)$ boils down to an action on $M$ by Poisson
diffeomorphisms, as considered in \cite{FOR}.

Although we are primarily interested in Dirac structures, it turns
out that more general closed IM 2-forms naturally arise when one
considers reduction by symmetries; so we will work in the more
general context of $(i)$, $(ii)$, $(iii)$ from the outset.

\section{Momentum maps and lifted Hamiltonian actions}
\label{sec:momentum}

Let us consider the geometric set-up described in $(i)$, $(ii)$,
$(iii)$ of Section \ref{subsec:setup}. Let $J_{can}: T^*M \to
\frakg^*$ be the momentum map of the canonical lifting of the
$G$-action on $M$ to $T^*M$, given by \eqref{eq:Jcan0}, and let
\begin{equation}\label{eq:JA}
J_A :=J_{can}\circ \mu: A\to \frakg^*.
\end{equation}

\begin{lemma}\label{lem:morphism}
The map $J_A$ is a morphism of Lie algebroids, where $\frakg^*$ is
viewed as a trivial Lie algebroid over a point (i.e., a Lie algebra
with trivial bracket).
\end{lemma}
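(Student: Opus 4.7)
The plan is to check the two conditions for $J_A$ to be a morphism of Lie algebroids. Since $\gstar$ sits over a point with zero anchor and trivial bracket, anchor-compatibility is automatic (both sides vanish). The bracket-compatibility condition, after pairing with an arbitrary $u\in\frakg$, becomes the scalar identity
\[
\SP{J_A([a,b]),u} \;=\; \rho(a)\SP{J_A(b),u} - \rho(b)\SP{J_A(a),u}, \qquad a,b\in\Gamma(A).
\]
Unwinding the definitions via \eqref{eq:Jcan0} and \eqref{eq:JA}, one has $\SP{J_A(a),u}=\mu(a)(u_M)$, so the target identity reads
\[
\mu([a,b])(u_M) \;=\; \rho(a)\bigl(\mu(b)(u_M)\bigr) - \rho(b)\bigl(\mu(a)(u_M)\bigr).
\]

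My first step is to expand the left-hand side via the IM relation \eqref{eq:IM2}, contract the resulting 1-forms with $u_M$, and simplify using Cartan's formula $(\Lie_X\alpha)(Y)=X(\alpha(Y))-\alpha([X,Y])$ together with the identity $[u_M,\rho(a)] = \rho(D(a))$ coming from \eqref{eq:anchor2}, where $(u_M,D)\in \sym(A)$ is the infinitesimal automorphism attached to $u$. After this routine bookkeeping, the target identity reduces to showing
\[
\mu(b)(\rho(D(a))) - \mu(a)(\rho(D(b))) + u_M\bigl(\mu(a)(\rho(b))\bigr) = 0.
\]

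The main obstacle — and the only place where the equivariance hypothesis in $(iii)$ intervenes — is establishing this last cancellation. For this I would apply the infinitesimal equivariance \eqref{eq:equiv} in the form $\Lie_{u_M}\mu(a) = \mu(D(a))$, evaluate both sides on $\rho(b)$, and invoke Cartan once more to obtain
\[
u_M\bigl(\mu(a)(\rho(b))\bigr) = \mu(D(a))(\rho(b)) + \mu(a)(\rho(D(b))).
\]
Rewriting $\mu(D(a))(\rho(b)) = -\mu(b)(\rho(D(a)))$ via the skew-symmetry \eqref{eq:IM1}, this becomes exactly the negative of the first two terms in the displayed expression, producing the required vanishing. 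Beyond this, I do not anticipate further subtleties; the delicate part is simply keeping track of signs and systematically using \eqref{eq:IM1} to interchange $\mu(a)(\rho(b))$ with $-\mu(b)(\rho(a))$.
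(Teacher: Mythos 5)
Your proof is correct and follows essentially the same route as the paper's: pairing with $u\in\frakg$, expanding $\mu([a,b])$ via \eqref{eq:IM2}, and closing the computation with Cartan's formula, the equivariance \eqref{eq:equiv}, the skew-symmetry \eqref{eq:IM1}, and the relation \eqref{eq:anchor2}. The only difference is organizational --- you reduce everything to a residual three-term cancellation rather than expanding both sides of \eqref{eq:Jmorphism} and comparing --- which does not change the substance.
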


\begin{proof}
Note that $J_A: A\to \frakg^*$ is a morphism of vector bundles,
because both $\mu:A\to T^*M$ and $J_{can}:T^*M \to \frakg^*$ are.
The remaining condition to be checked is
\begin{equation}\label{eq:Jmorphism}
J_A([a,b])=\Lie_{\rho(a)}J_A(b)-\Lie_{\rho(b)}J_A(a), \;\;\;
\forall\, a,b \in \Gamma(A),
\end{equation}
as an equality in $C^\infty(M,\frakg^*)$.

Pairing $J_A([a,b])$ with $v\in \frakg$, and using \eqref{eq:Jcan0}
and \eqref{eq:IM2}, we get
$$
\SP{J_{can}(\mu([a,b])),v}=\SP{\mu([a,b]),v_M} =
\SP{\Lie_{\rho(a)}\mu(b)-\Lie_{\rho(b)}\mu(a) + \dd
i_{\rho(b)}\mu(a), v_M}.
$$
The right-hand-side of the previous equation agrees with
\begin{equation}\label{eq:rhs}
\SP{\Lie_{\rho(a)}\mu(b),v_M} - \SP{\Lie_{\rho(b)}\mu(a),v_M} +
\Lie_{v_M} i_{\rho(b)}\mu(a).
\end{equation}
Using \eqref{eq:IM1}, \eqref{eq:anchor2}, as well as the
infinitesimal equivariance of $\mu$ \eqref{eq:equiv}, we obtain
\begin{align*}
\Lie_{v_M} i_{\rho(b)}\mu(a) = &\SP{\Lie_{v_M}\mu(a),\rho(b)} +
\SP{\mu(a),[v_M,\rho(b)]}\\
= & \SP{\mu(D(a)),\rho(b)} + \SP{\mu(a),[v_M,\rho(b)]}\\
=& -\SP{\mu(b),\rho(D(a))} + \SP{\mu(a),[v_M,\rho(b)]}\\
=& -\SP{\mu(b),[v_M,\rho(a)]} + \SP{\mu(a),[v_M,\rho(b)]}.
\end{align*}
So \eqref{eq:rhs} becomes
\begin{equation}\label{eq:rhs2}
\SP{\Lie_{\rho(a)}\mu(b),v_M}  - \SP{\Lie_{\rho(b)}\mu(a),v_M}
-\SP{\mu(b),[v_M,\rho(a)]} + \SP{\mu(a),[v_M,\rho(b)]}.
\end{equation}
On the other hand
$$
\SP{\Lie_{\rho(a)}J_A(b),v}= \Lie_{\rho(a)}\SP{J_A(b),v}=
\SP{\Lie_{\rho(a)}\mu(b),v_M}+\SP{\mu(b),[\rho(a),v_M]},
$$
and swapping $a$ and $b$ gives
$$
\SP{\Lie_{\rho(b)}J_A(a),v}=
\SP{\Lie_{\rho(b)}\mu(a),v_M}+\SP{\mu(a),[\rho(b),v_M]}.
$$
Comparing with \eqref{eq:rhs2}, we see that \eqref{eq:rhs} equals
$$
\SP{\Lie_{\rho(a)}J_A(b) - \Lie_{\rho(b)}J_A(a),v},
$$
which immediately implies that \eqref{eq:Jmorphism} holds.
\end{proof}

Suppose that $A$ is integrable and $\Grd$ is the
source-simply-connected Lie groupoid integrating it. We now make use
of the Lie algebroid/groupoid version of Lie's second theorem (see
e.g. the appendix of \cite{Mac-Xu2}). Since the $G$-action on $A$ is
by Lie-algebroid automorphisms, it has a natural lift to a
$G$-action on $\Grd$ by Lie-groupoid automorphisms. Similarly, since
$J_A: A\to \frakg^*$ is a Lie algebroid morphism, it integrates to a
Lie groupoid morphism
\begin{equation}\label{eq:J}
J: \Grd\to \frakg^*,
\end{equation}
where $\frak{g}^*$ is now viewed as a groupoid over a point (i.e.,
an abelian group with respect to addition); i.e., $J$ satisfies
\begin{equation}\label{eq:Jcocycle}
J(gh)=J(g)+ J(h),
\end{equation}
for $(g,h)\in \Grd^{(2)}$. We also know from \cite{bcwz} (c.f.
\cite{AC,BuCa,BCO}) that the closed IM 2-form $\mu$ uniquely
integrates to a closed 2-form $\omega$ in $\Grd$ which is
\textit{multiplicative}, in the sense that
\begin{equation}\label{eq:mult}
m^*\omega=\pr_1^*\omega + \pr_2^*\omega,
\end{equation}
where $\pr_1,\pr_2: \Grd\times \Grd \to \Grd$ are the natural
projections and $m:\Grd^{(2)}\to \Grd$ is the multiplication on
$\Grd$. The relationship between $\omega$ and $\mu$ is given by
\begin{equation}\label{eq:muomega}
i_X \mu(a)= \omega(a, X)
\end{equation}
for $a\in A$, $X\in TM$, where we view $A$ and $TM$ as subbundles of
$T\Grd|_M$.

Note that the notion of being multiplicative, defined by condition
\eqref{eq:mult}, makes sense for differential forms of any degree;
in particular, \eqref{eq:Jcocycle} says that $J$ is a multiplicative
($\frak{g}^*$-valued) function.

\begin{proposition}\label{prop:lifted} The map $J: \Grd\to \frak{g}^*$ is
$G$-equivariant and satisfies
\begin{equation}\label{eq:mommap}
i_{v_\Grd}\omega = -\dd\SP{J,v}, \;\;\; \forall\, v\in \frak{g},
\end{equation}
where $v_\Grd$ is the infinitesimal generator defined by $v$ for the
$G$-action on $\Grd$.
\end{proposition}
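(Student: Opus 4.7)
The plan is to reduce both assertions to checks at the Lie algebroid level, exploiting the source-simply-connectedness of $\Grd$.

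For the $G$-equivariance of $J$: since the $G$-action on $A$ is by Lie algebroid automorphisms, it integrates (by the Lie~II result for morphisms in the appendix of \cite{Mac-Xu2}) to a $G$-action on $\Grd$ by Lie groupoid automorphisms $\{\Phi_g\}_{g\in G}$, which is the action used just before \eqref{eq:J}. The algebroid morphism $J_A = J_{can}\circ\mu$ is equivariant with respect to the coadjoint action on $\frakg^*$, as $\mu$ is equivariant by hypothesis and $J_{can}$ is the standard coadjoint-equivariant cotangent-lift momentum map. For each $g\in G$, both $J\circ\Phi_g$ and $\Ad^*_g\circ J$ are Lie groupoid morphisms $\Grd\to\frakg^*$ integrating $J_A$, hence coincide by uniqueness in Lie~II.

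For \eqref{eq:mommap}, I consider $\alpha := i_{v_\Grd}\omega + \dd\SP{J,v} \in \Omega^1(\Grd)$ and aim at $\alpha = 0$. Since $\dd\omega=0$ and the $G$-action preserves $\omega$ (by integration of the equivariance of $\mu$, so $\Lie_{v_\Grd}\omega=0$), a Cartan calculation gives $\dd\alpha=0$. The form $\alpha$ is also multiplicative: being the infinitesimal generator of an action by groupoid automorphisms, $v_\Grd$ is a multiplicative vector field, $v_\Grd|_{gh}=\dd m(v_\Grd|_g,v_\Grd|_h)$ on composable pairs; combined with \eqref{eq:mult} a direct pullback computation shows $i_{v_\Grd}\omega$ is multiplicative, and the cocycle \eqref{eq:Jcocycle} makes $\SP{J,v}$ a multiplicative function, so $\dd\SP{J,v}$ is multiplicative as well.

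It remains to check $\alpha|_A=0$ and conclude. For $x\in M$ and $a\in A_x\subset T_x\Grd$, the fact that the $G$-action on $\Grd$ restricts through units to the $G$-action on $M$ gives $v_\Grd|_x=v_M|_x\in T_xM$; from \eqref{eq:muomega},
\begin{equation*}
(i_{v_\Grd}\omega)_x(a)=\omega_x(v_M,a)=-i_{v_M}\mu(a)=-\SP{J_A(a),v}.
\end{equation*}
On the other hand, $J$ integrating $J_A$ means precisely that $\dd J|_A=J_A$ (under $\frakg^*\cong T_0\frakg^*$), so $(\dd\SP{J,v})_x(a)=\SP{J_A(a),v}$; adding yields $\alpha|_A=0$. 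Since multiplicative 1-forms automatically vanish on the unit submanifold $M$, the form $\alpha$ restricts to zero on all of $T\Grd|_M$. The key remaining input, which is the main technical hurdle, is the 1-form analogue of the integration theorem of \cite{bcwz}: a closed multiplicative 1-form on source-simply-connected $\Grd$ is determined by its infinitesimal data on $A$, forcing $\alpha=0$. This can be verified by a right-translation argument showing that multiplicativity makes $\alpha(\vec{a})$ (for a right-invariant $\vec{a}$ associated to $a\in\Gamma(A)$) constant along target fibers with value $\alpha_x(a)$ at the unit $x$, after which closedness together with source-simply-connectedness propagates the vanishing to all of $T\Grd$.
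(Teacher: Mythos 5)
Your proof is correct and follows essentially the same route as the paper's: $G$-equivariance via uniqueness of integration of the Lie-algebroid morphism $J_A$, and \eqref{eq:mommap} by observing that both sides are multiplicative $1$-forms and comparing them on $A\subset T\Grd|_M$ using \eqref{eq:muomega} and $\dd J|_A=J_A$. Your additional care in checking that the difference is closed (via $\Lie_{v_\Grd}\omega=0$) and in sketching why a (closed) multiplicative $1$-form vanishing on $A$ must vanish is a detail the paper delegates to the citations \cite{AC,BuCa}.
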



\begin{proof}
Let us verify the $G$-equivariance of $J:\Grd\to \frak{g}^*$. Each
$\sigma \in G$ defines an automorphism $\phi_\sigma$ of $A$ as well
as its global counterpart $\Phi_\sigma$, which is an automorphism of
$\Grd$. Then $(\Ad^*_{\sigma})^{-1}\circ J_A \circ \phi_\sigma:A\to
\frak{g}^*$ is a Lie-algebroid morphism, whose global counterpart is
$(\Ad^*_{\sigma})^{-1}\circ J \circ \Phi_\sigma: \Grd\to
\frak{g}^*$. Since both $J_{can}$ and $\mu$ are $G$-equivariant, so
is $J_A$; i.e., $J_A=(\Ad^*_{\sigma})^{-1}\circ J_A \circ
\phi_\sigma$. It follows from the uniqueness of integration of
morphisms that $J= (\Ad^*_{\sigma})^{-1}\circ J \circ \Phi_\sigma$,
i.e, $J$ is $G$-equivariant. 

The fact that $G$ acts on $\Grd$ by Lie-groupoid automorphisms
implies that the infinitesimal generators of this action are
multiplicative vector fields (i.e., each $v_\Grd: \Grd\to T\Grd$ is
a groupoid morphism, see e.g. \cite{Mac-Xu3}). It follows that the
1-form $i_{v_\Grd}\omega$ is multiplicative. Since $J$ is also
multiplicative (see \eqref{eq:Jcocycle}), so is $-\dd\SP{J,v}$. To
show that they are the same, it suffices (see e.g. \cite{AC,BuCa})
to check that these 1-forms agree on elements $a\in A_x =
\ker(d\sour)|_x \subseteq T_x\Grd$, $x\in M$. Note that
$v_\Grd(x)=v_M(x)$ for all $x\in M$. So
$$
\omega_x(v_\Grd,a)=-\omega_x(a,v_M)=-i_{v_M}\mu(a).
$$
On the other hand, using that $J_A = \dd J|_A$,
$$
-\SP{\dd J(a),v}=- \SP{J_A(a),v}=-\SP{\mu(a),v_M}.
$$
\end{proof}

As in \cite[Thm.~3.3(ii)]{FOR}, one can check that there exists a
map $j:M\to \frakg^*$ such that $J= \sour^*j - \tar^*j$ if and only
if $\SP{\mu(a),u_M}= -\dd j^u(\rho(a))$, for all $u\in \frakg$,
$a\in A$, where $j^u(x)=\SP{j(x),u}$. The map $j$ should be seen as
a momentum map for the $G$-action on $M$ preserving the closed IM
2-form $\mu:A \to T^*M$.

\section{Reduction of closed IM
2-forms}\label{sec:infreduction}

We keep considering the set-up in $(i)$, $(ii)$, $(iii)$ of Section
\ref{subsec:setup}, and  $J_A: A\to \frakg^*$  as in \eqref{eq:JA}.
Let us denote by $K\subseteq TM$ the distribution tangent to the
$G$-orbits on $M$, and by $K^\circ \subseteq T^*M$ its annihilator.

\begin{lemma}\label{lem:JA}
If $J_A^{-1}(0)$ is a subbundle of $A$, then it is a Lie
subalgebroid of $A$.
\end{lemma}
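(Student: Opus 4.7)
The plan is to derive the conclusion as a direct consequence of Lemma~\ref{lem:morphism}, which already identifies $J_A : A\to \frakg^*$ as a Lie algebroid morphism (with $\frakg^*$ carrying trivial bracket and zero anchor). The strategy is just to unpack what the morphism condition \eqref{eq:Jmorphism} means for sections that lie in $J_A^{-1}(0)$.

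First I would use the hypothesis that $J_A^{-1}(0)\subseteq A$ is a subbundle to reformulate its sections: $a\in \Gamma(A)$ lies in $\Gamma(J_A^{-1}(0))$ precisely when the $\frakg^*$-valued function $J_A\circ a$ vanishes identically on $M$. Because $J_A^{-1}(0)$ is a subbundle, around each point we can pick local frames of $A$ whose first few elements frame $J_A^{-1}(0)$, so there is no shortage of sections of this form.

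Next I would apply \eqref{eq:Jmorphism}: for any $a,b\in \Gamma(J_A^{-1}(0))$, both $J_A(a)$ and $J_A(b)$ are the zero function, hence
\[
J_A([a,b]) \;=\; \Lie_{\rho(a)} J_A(b) - \Lie_{\rho(b)} J_A(a) \;=\; 0,
\]
so $[a,b]\in \Gamma(J_A^{-1}(0))$. Together with the automatic observation that the anchor of $A$ restricts to $J_A^{-1}(0) \to TM$ (this condition is inherited from $\rho$), the bracket and anchor of $A$ restrict to $J_A^{-1}(0)$, making it a Lie subalgebroid.

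There is essentially no obstacle beyond bookkeeping; the only point requiring the constant-rank hypothesis is the very existence of enough genuine (pointwise) sections of $J_A^{-1}(0)$ needed to apply the morphism identity, and this is guaranteed by the subbundle assumption.
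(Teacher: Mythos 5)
Your argument is correct and is essentially the paper's own proof: the authors likewise deduce closure of $\Gamma(J_A^{-1}(0))$ under the bracket directly from the morphism identity \eqref{eq:Jmorphism} established in Lemma~\ref{lem:morphism}. Your added remarks on local frames and the restriction of the anchor are just the bookkeeping the paper leaves implicit.
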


\begin{proof}
The result follows from $\Gamma(J_A^{-1}(0))$ being closed under the
Lie bracket on $\Gamma(A)$, as a consequence of
\eqref{eq:Jmorphism}.
\end{proof}

\begin{remark} If $\mu$ has constant rank, then $J_A^{-1}(0)\subseteq A$ is a subbundle
if and only if $\mathrm{Im}(\mu) \cap K^\circ$ has constant rank.
This follows from $J_{can}^{-1}(0) = K^\circ$ and $J_A=J_{can}\circ
\mu$.
\end{remark}

Let us assume that $J_A^{-1}(0)$ is a subbundle of $A$ and that the
$G$-action on $M$ is free and proper. The $G$-equivariance of $J_A$
 implies that the $G$-action on $A$ keeps $J_A^{-1}(0)$ invariant;
so we may consider the vector bundle (see \eqref{eq:EG})
$$
A_{red}:= J_A^{-1}(0)/G \to M/G.
$$
The quotient map
\begin{equation}\label{eq:quotmap}
p:M\to M/G
\end{equation}
induces an isomorphism $p^*:
C^\infty(M/G)\to C^\infty(M)^G$, where $C^\infty(M)^G$ denotes the
space of $G$-invariant functions on $M$; also, $p^* A_{red} =
J_A^{-1}(0)$ and the pull-back of sections
$$
p^*: \Gamma(A_{red})\to \Gamma(J_A^{-1}(0))
$$
gives an identification between $\Gamma(A_{red})$ and
$\Gamma(J_A^{-1}(0))^G$.

\begin{proposition}\label{prop:Ared}
The vector bundle $A_{red} \to M/G$ inherits a natural Lie-algebroid
structure, and the closed IM 2-form $\mu$ on $A$ induces a closed IM
2-form $\mu_{red}$ on $A_{red}$.
\end{proposition}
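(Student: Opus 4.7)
The plan is to descend both structures by identifying $\Gamma(A_{red})$ with $\Gamma(J_A^{-1}(0))^G$ via $p^*$, and differential forms on $M/G$ with the subspace of $G$-invariant, horizontal forms on $M$ (those annihilated by contraction with any $v_M$, $v\in\frakg$). Since $p:M\to M/G$ is a surjective submersion and $p^*$ is injective on functions and forms, all relevant identities can be checked upstairs on $M$ and then transferred.

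First I would construct the Lie algebroid on $A_{red}$. Because the $G$-action on $A$ is by Lie algebroid automorphisms, the anchor is $G$-equivariant (see \eqref{eq:anchor}); hence for $a\in\Gamma(J_A^{-1}(0))^G$ the vector field $\rho(a)$ is $G$-invariant and descends to a vector field $\rho_{red}(\bar a)$ on $M/G$, and $C^\infty(M/G)$-linearity yields a bundle map $\rho_{red}:A_{red}\to T(M/G)$. For the bracket, given $\bar a_1,\bar a_2\in\Gamma(A_{red})$ with $G$-invariant lifts $a_1,a_2\in\Gamma(J_A^{-1}(0))^G$, the bracket $[a_1,a_2]$ is again $G$-invariant (the $G$-action preserves brackets) and lies in $\Gamma(J_A^{-1}(0))$ by Lemma~\ref{lem:JA}; I set $[\bar a_1,\bar a_2]_{red}:=\overline{[a_1,a_2]}$. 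The Leibniz and Jacobi identities for $A_{red}$ transfer directly from those on $A$ using injectivity of $p^*$ on functions.

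Next I would define $\mu_{red}$. For $a\in\Gamma(J_A^{-1}(0))^G$, the 1-form $\mu(a)$ is $G$-invariant by \eqref{eq:equiv} and horizontal: for any $v\in\frakg$, $i_{v_M}\mu(a)=\SP{\mu(a),v_M}=\SP{J_A(a),v}=0$. Thus $\mu(a)=p^*\alpha$ for a unique $\alpha\in\Omega^1(M/G)$, and setting $\mu_{red}(\bar a):=\alpha$ gives a $C^\infty(M/G)$-linear map on sections, hence a vector bundle morphism $\mu_{red}:A_{red}\to T^*(M/G)$ covering the identity. The IM conditions \eqref{eq:IM1}--\eqref{eq:IM2} for $\mu_{red}$ would then be checked by pullback: since $\rho(a)$ is $p$-related to $\rho_{red}(\bar a)$ and $p^*$ commutes with $\dd$, contractions and Lie derivatives, every term in the IM identities written for $\mu_{red}$ pulls back to the corresponding term for $\mu$ applied to the $G$-invariant lifts, and these hold by hypothesis; injectivity of $p^*$ then yields \eqref{eq:IM1}--\eqref{eq:IM2} downstairs.

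The only real subtlety lies in packaging these descent steps cleanly, and it rests on two compatibilities: $J_A^{-1}(0)$ is a $G$-invariant Lie subalgebroid of $A$ (by Lemma~\ref{lem:JA} together with the $G$-equivariance of $J_A$ established in the proof of Lemma~\ref{lem:morphism}), and the vanishing $J_A(a)=0$ is exactly the condition making $\mu(a)$ horizontal. Once these compatibilities are in place, the construction is routine and no further obstacle is expected.
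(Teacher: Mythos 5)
Your proposal is correct and follows essentially the same route as the paper: transport the bracket and anchor through the identification $p^*:\Gamma(A_{red})\cong\Gamma(J_A^{-1}(0))^G$, observe that $J_A(a)=0$ forces $\mu(a)$ to take values in $K^\circ$ (equivalently, to be horizontal) so that the $G$-equivariant $\mu$ descends via $K^\circ/G\cong T^*(M/G)$, and verify \eqref{eq:IM1}--\eqref{eq:IM2} by pullback along the submersion $p$. The only (harmless) discrepancy is a citation detail: the $G$-equivariance of $J_A$ is recorded in the discussion preceding the proposition rather than in Lemma~\ref{lem:morphism}.
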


\begin{proof}
Since the $G$-action on $J_A^{-1}(0)$ is by Lie-algebroid
automorphisms, the space $\Gamma(J_A^{-1}(0))^G$ is closed under the
Lie bracket, and the identification
$$p^*:
\Gamma(A_{red})\stackrel{\sim}{\to} \Gamma(J_A^{-1}(0))^G
$$
defines a Lie bracket $[\cdot,\cdot]_{red}$ on $\Gamma(A_{red})$ by
$ p^* [a,b]_{red} = [p^*a,p^*b]$. We define an anchor $\rho_{red}:
A_{red}\to T(M/G)$ by the condition
\begin{equation}\label{eq:rhored}
\rho_{red}(a) = \dd p(\rho(p^*a)),
\end{equation}
for $a\in \Gamma(A_{red})$; note that this expression is
well-defined (by the compatibility between $\rho$ and the
$G$-action, see \eqref{eq:anchor}) and $C^\infty(M/G)$-linear. For
$a$, $b \in \Gamma(A_{red})$ and $f\in C^\infty(M/G)$, the Leibniz
identity follows from
\begin{align*}
p^* ([a,fb]_{red})&=[p^*a, (p^*f) p^*b]=(\Lie_{\rho(p^*a)}p^*f)p^*b
+
p^*f [p^*a, p^*b]\\
&= p^* ( (\Lie_{\dd p(\rho(p^*a))}f) b + f[a,b]_{red})= p^* (
(\Lie_{\rho_{red}(a)}f) b + f[a,b]_{red}).
\end{align*}

Since $J_A=J_{can}\circ \mu$, it is clear that $\mu:A \to T^*M$
restricts to $J_A^{-1}(0)\to J_{can}^{-1}(0)= K^\circ$; since it is
$G$-equivariant, it descends to a bundle map
$$
\mu_{red}: A_{red}\to T^*(M/G),
$$
where we used the natural identification $K^\circ/G \cong T^*(M/G)$.
It is a direct verification that the conditions \eqref{eq:IM1},
\eqref{eq:IM2} for $\mu$ imply the same conditions for $\mu_{red}$.
\end{proof}

\section{Global reduction}\label{sec:globalred}

We now discuss the global reduction associated with the data in
$(i)$, $(ii)$, $(iii)$ of Section \ref{subsec:setup}. Let us assume
that $J_A^{-1}(0)\subseteq A$ has constant rank, so it is a Lie
subalgebroid of $A$, see Lemma~\ref{lem:JA}. We assume that $A$ is
an integrable Lie algebroid and that $\Grd$ is the
source-simply-connected Lie groupoid integrating it; let $J:\Grd\to
\frakg^*$ be the multiplicative function integrating $J_A$, as in
Section \ref{sec:momentum}. We say that $0 \in \frakg^*$ is a
\textit{clean value} for $J$ if $J^{-1}(0)$ is a submanifold and
$\ker(\dd J)|_g = T_g J^{-1}(0)$ for all $g\in J^{-1}(0)$.

\begin{lemma}\label{lem:subm} If $0$ is a clean value for $J$,
then $J^{-1}(0)$ is a Lie subgroupoid of $\Grd$ over $M$ whose Lie
algebroid is $J_A^{-1}(0)$.
\end{lemma}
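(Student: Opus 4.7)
The plan is to prove in turn that $J^{-1}(0)$ is closed under the groupoid operations, that $\sour|_{J^{-1}(0)}\colon J^{-1}(0)\to M$ is a surjective submersion, and that the Lie algebroid of the resulting wide Lie subgroupoid coincides with $J_A^{-1}(0)$.

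The algebraic closure properties are immediate from the fact that $J$ is a groupoid morphism into the abelian group $(\frakg^*,+)$. Indeed, the cocycle identity \eqref{eq:Jcocycle} forces $J(1_x)=0$, $J(g^{-1})=-J(g)$, and $J(gh)=J(g)+J(h)$, so $M\subseteq J^{-1}(0)$ and $J^{-1}(0)$ is stable under multiplication and inversion. By the cleanness hypothesis, $J^{-1}(0)$ is moreover an embedded submanifold of $\Grd$ with $T_gJ^{-1}(0)=\ker\dd J|_g$ for every $g\in J^{-1}(0)$.

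The main step is to verify that $\sour|_{J^{-1}(0)}$ is a submersion. Surjectivity follows from the existence of the unit section. For the infinitesimal condition, the plan is to combine two rank computations. First, cleanness implies that $\dim\ker \dd J|_g$ is constant on $J^{-1}(0)$, hence $\dd J$ has constant rank there; evaluating at a unit $x\in M$ using $T_x\Grd=T_xM\oplus A_x$ together with $\dd J|_{T_xM}=0$ and $\dd J|_{A_x}=J_A|_x$ yields
\[
T_xJ^{-1}(0)=T_xM\oplus J_A^{-1}(0)|_x,\qquad \mathrm{rank}(\dd J|_g)=\mathrm{rank}(A)-\mathrm{rank}(J_A^{-1}(0)).
\]
Second, for any $g\in J^{-1}(0)$, the multiplicativity of $J$ together with $J(g)=0$ gives $J\circ R_g=J$ on $\sour^{-1}(\tar(g))$; differentiating at $1_{\tar(g)}$ and using the isomorphism $\dd R_g|_{1_{\tar(g)}}\colon A_{\tar(g)}\stackrel{\sim}{\longrightarrow}T^{\sour}_g\Grd$ identifies the restriction $\dd J|_g|_{T^{\sour}_g\Grd}$ with $J_A|_{\tar(g)}$, so its kernel has dimension $\mathrm{rank}(J_A^{-1}(0))$. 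The two ingredients combine in the dimension count
\[
\dim\dd\sour\bigl(T_gJ^{-1}(0)\bigr)=\dim T_gJ^{-1}(0)-\dim\bigl(T_gJ^{-1}(0)\cap T^{\sour}_g\Grd\bigr)=\dim M,
\]
showing that $\dd\sour(T_gJ^{-1}(0))=T_{\sour(g)}M$ as required.

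Finally, the Lie algebroid of $J^{-1}(0)$ is by definition $\ker(\dd\sour|_{J^{-1}(0)})|_M=A\cap TJ^{-1}(0)|_M$, which by the decomposition of the previous paragraph equals $J_A^{-1}(0)$; the induced anchor and bracket are just the restrictions of those on $A$, matching the subalgebroid structure from Lemma~\ref{lem:JA}. The hardest part of this plan is the submersion property: cleanness alone controls only the total rank of $\dd J$, while right-translation invariance of $J$ controls only the restriction $\dd J|_{T^{\sour}\Grd}$, and the submersion property emerges only by combining the two.
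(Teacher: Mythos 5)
Your proposal is correct and follows essentially the same route as the paper: closure under the groupoid operations from the cocycle identity \eqref{eq:Jcocycle}, the dimension count $\dim J^{-1}(0)=\dim M+\mathrm{rank}(J_A^{-1}(0))$ obtained at units via $T_x\Grd=T_xM\oplus A_x$ and $\dd J|_A=J_A$, the identification of $\ker(\dd\sour)_g\cap\ker(\dd J)_g$ with $\dd r_g(J_A^{-1}(0))$ via right-translation invariance of $J$ on $J^{-1}(0)$, and rank--nullity to conclude that $\sour|_{J^{-1}(0)}$ is a submersion. The identification of the Lie algebroid as $A\cap TJ^{-1}(0)|_M=J_A^{-1}(0)$ also matches the paper's argument.
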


\begin{proof}
Let $\tilde{\sour}, \tilde{\tar}: J^{-1}(0)\to M$ be the
restrictions of $\sour$ and $\tar$ to $J^{-1}(0)$. In order to
verify that $\tilde{\sour}$ is a surjective submersion
($\tilde{\tar}$ can be treated analogously), note that
\begin{equation}\label{eq:rankds}
\ker(\dd \tilde{\sour})_g = \ker (\dd \sour)_g \cap T_gJ^{-1}(0) =
\ker (\dd \sour)_g \cap \ker (\dd J)_g,\;\;\; g\in J^{-1}(0).
\end{equation}
Since  $M\subseteq J^{-1}(0)$, as a consequence of
\eqref{eq:Jcocycle}, and  $\tilde{\sour}|_M = \id$, we see that
$\dd\tilde{\sour}|_M$ is onto. It follows from \eqref{eq:rankds} and
$J_A=\dd J|_A$ that
\begin{equation}\label{eq:rankds2}
\ker(\dd \tilde{\sour}) |_M = A\cap \ker(\dd J)|_M = J_A^{-1}(0),
\end{equation}
and a dimension count shows that
\begin{equation}\label{eq:dimcount}
\mathrm{rank}(J_A^{-1}(0))=\dim(J^{-1}(0))- \dim(M).
\end{equation}

For $g\in \Grd$, let $r_g: \sour^{-1}(\tar(g))\to
\sour^{-1}(\sour(g))$ be the associated right-translation map.
Condition \eqref{eq:Jcocycle} implies that $(\dd J)_g(\dd
r_g)_{\tar(g)} a = (\dd J)_{\tar(g)}(a)$ for $a \in A_{\tar(g)}$,
which, along with \eqref{eq:rankds}, shows that, for $g\in
J^{-1}(0)$,
$$
\ker(\dd \tilde{\sour})_g = \ker (\dd \sour)_g \cap \ker (\dd J)_g =
(\dd r_g)_{\tar(g)} A \cap \ker (\dd J)_g = (\dd
r_g)_{\tar(g)}(J_A^{-1}(0)).
$$
It now follows from a  dimension count using \eqref{eq:dimcount}
that $\mathrm{rank}(\dd\tilde{\sour})_g = \dim(J^{-1}(0)) -
\mathrm{rank}(J_A^{-1}(0)) = \dim(M)$, so $\tilde{\sour}$ is a
submersion.

A direct consequence of \eqref{eq:Jcocycle} is that $J^{-1}(0)$ is
closed under the multiplication on $\Grd$,  which endows $J^{-1}(0)$
with a Lie groupoid structure with source and target maps given by
$\tilde{\sour}$ and $\tilde{\tar}$. As a result of
\eqref{eq:rankds2}, its Lie algebroid agrees with $J_A^{-1}(0)$.
\end{proof}

As in Section \ref{sec:infreduction}, we henceforth assume that the
$G$-action on $M$ is free and proper, which implies that the lifted
$G$-action on $\Grd$ is also free and proper \cite[Prop.~4.4]{FOR}.
If $0$ is a clean value for $J$, so that $J^{-1}(0)$ is a
submanifold (necessarily $G$-invariant by the equivariance of $J$),
then the quotient
$$
\Grd_{red}':= J^{-1}(0)/G
$$
is a smooth manifold that naturally inherits a closed 2-form
$\omega_{red}'$; indeed, just as in usual Marsden-Weinstein
reduction, condition \eqref{eq:mommap} guarantees that the pull-back
of $\omega$ to $J^{-1}(0)$ is basic with respect to the $G$-action.
Let $A_{red}$ and $\mu_{red}$ be as in Prop.~\ref{prop:Ared}.

\begin{proposition}\label{prop:globalred}
Suppose $0\in \frakg^*$ is a clean value for $J$. Then
\begin{enumerate}
\item $\Grd_{red}'$ is a Lie groupoid over $M/G$ such that the quotient map $J^{-1}(0)\to
\Grd_{red}'$ is a groupoid homomorphism, and whose Lie algebroid is
$A_{red}$.

\item The closed 2-form $\omega_{red}'$ on $\Grd_{red}'$ is
multiplicative and integrates the closed IM 2-form $\mu_{red}$ on
$A_{red}$.
\end{enumerate}
\end{proposition}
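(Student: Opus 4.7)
The plan for part (1) is to descend the Lie-subgroupoid structure of $J^{-1}(0)$ from Lemma \ref{lem:subm} along the free and proper $G$-action. By Proposition \ref{prop:lifted}, $J$ is $G$-equivariant, so $J^{-1}(0)$ is $G$-invariant; since the $G$-action on $\Grd$ is by Lie-groupoid automorphisms, it preserves the source, target, multiplication, unit, and inversion of $J^{-1}(0)$, and I would check that these $G$-equivariant structure maps descend to the quotient manifold $\Grd_{red}'$. The quotient map $q: J^{-1}(0)\to \Grd_{red}'$ covers $p:M\to M/G$ and becomes a groupoid homomorphism; submersivity of $\sour_{red}$ and $\tar_{red}$ should follow from $\sour_{red}\circ q = p\circ \tilde{\sour}$ (similarly for $\tar_{red}$) together with the submersivity of $\tilde{\sour}$ and $p$. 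To identify $A(\Grd_{red}')$ with $A_{red}$, I would differentiate $q$ at a unit point $x\in M$: its kernel is the tangent to the $G$-orbit $\{v_M(x):v\in \frakg\}\subseteq T_xM$, which meets $A_x=\ker(\dd\sour)_x$ trivially, so $q$ restricts to an injection $J_A^{-1}(0)\to A(\Grd_{red}')$; a dimension count (using that $q$ is a principal $G$-bundle) makes it an isomorphism. The anchor $\dd\tar_{red}|_{A(\Grd_{red}')}$ should then match $\rho_{red}$ from \eqref{eq:rhored} directly, and the induced bracket should match $[\cdot,\cdot]_{red}$ from Proposition \ref{prop:Ared} via $G$-equivariant right-invariant vector fields on $J^{-1}(0)$.

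For part (2), the key points will be $G$-invariance of $\omega$ and a Marsden--Weinstein-style descent. $G$-invariance should follow from the uniqueness of integration of closed IM 2-forms: for $\sigma\in G$, the pullback $(\Phi_\sigma)^*\omega$ is a closed multiplicative 2-form on $\Grd$ whose associated IM 2-form is $\phi_\sigma^*\mu=\mu$ by the equivariance of $\mu$, hence $(\Phi_\sigma)^*\omega=\omega$. Combined with $i_{v_\Grd}\omega|_{J^{-1}(0)} = -\dd\SP{J,v}|_{J^{-1}(0)}=0$ from \eqref{eq:mommap}, $\omega|_{J^{-1}(0)}$ is $G$-basic and descends to a closed 2-form $\omega_{red}'$ on $\Grd_{red}'$ satisfying $q^*\omega_{red}' = \omega|_{J^{-1}(0)}$. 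To establish multiplicativity, I would pull back the desired identity $m_{red}^*\omega_{red}'=\pr_1^*\omega_{red}'+\pr_2^*\omega_{red}'$ along the surjective submersion $(q\times q)|_{(J^{-1}(0))^{(2)}}$; using $q\circ m = m_{red}\circ (q\times q)$ and $\pr_i\circ (q\times q)=q\circ \pr_i$, this reduces to \eqref{eq:mult} restricted to $(J^{-1}(0))^{(2)}$, which holds. To verify that $\omega_{red}'$ integrates $\mu_{red}$ via \eqref{eq:muomega}, for $[a]\in A_{red,[x]}$ with lift $a\in J_A^{-1}(0)_x$ and $X\in T_{[x]}(M/G)$ with lift $\tilde{X}\in T_xM$, I would compute $\omega_{red}'([a],X) = (q^*\omega_{red}')(a,\tilde{X}) = \omega(a,\tilde{X}) = i_{\tilde{X}}\mu(a) = \mu_{red}([a])(X)$.

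The most delicate step will be the Lie-algebroid identification in part (1), which depends crucially on the cleanness assumption so that Lemma \ref{lem:subm} gives $J_A^{-1}(0)$ as the Lie algebroid of $J^{-1}(0)$; once that is in hand, the transversality between $A\subset T\Grd|_M$ and the infinitesimal $G$-orbits makes the quotient behave well. All the remaining structural identifications---anchor, bracket, multiplicativity of $\omega_{red}'$, and integration of $\mu_{red}$---should then follow from naturality of the quotient and the corresponding identifications on $\Grd$ and $A$, with no further essential obstacles.
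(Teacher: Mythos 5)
Your proposal is correct and follows essentially the same route as the paper: descend the groupoid structure of $J^{-1}(0)$ along the free and proper action, identify $A(\Grd_{red}')$ with $A_{red}$ by observing that $\ker(\dd q)\cap\ker(\dd\tilde{\sour})=\{0\}$ (freeness), obtain multiplicativity of $\omega_{red}'$ from $q^*\omega_{red}'=\iota^*\omega$ together with $q$ being a groupoid homomorphism, and match $\mu_{red}$ by the pointwise computation $\omega_{red}'(\underline{a},\underline{X})=\omega(a,X)=\mu(a)(X)$. Your extra care in checking $G$-invariance of $\omega$ via uniqueness of integration of IM 2-forms is a detail the paper disposes of before the proposition statement, but it does not change the argument.
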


\begin{proof}
The fact that $\Grd_{red}'$ inherits a groupoid structure, uniquely
determined by the property that the quotient map
$$
p: J^{-1}(0)\to \Grd_{red}'
$$
is a homomorphism, can be directly checked using the freeness of the
$G$-action (note that the restriction of this map to identity
section $M\subseteq J^{-1}(0)$ agrees with the quotient map
\eqref{eq:quotmap}, hence the abuse of notation). Let
$A(\Grd_{red}')$ be the Lie algebroid of $\Grd_{red}'$. The map $p$
induces a surjective morphism of Lie algebroids
\begin{equation}\label{eq:dq}
\dd p |_{J_A^{-1}(0)}: J_A^{-1}(0)\to A(\Grd_{red}'),
\end{equation}
covering the quotient map $M\to M/G$. We observe that $\ker(\dd
p)\cap \ker(\dd \tilde{\sour})=\{0\}$ since $X\in \ker(\dd p)$ if
and only if $X=u_\Grd$ for some $u\in \frakg$, and
$\dd\tilde{\sour}(u_\Grd)=u_M=0$ if and only if $u=0$ by freeness.
It follows that the kernel of \eqref{eq:dq} is trivial, so that
\eqref{eq:dq} induces an identification between $J_A^{-1}(0)$ and
the pull-back bundle $p^* A(\Grd_{red}')$, from where we obtain a
natural identification between $A(\Grd_{red}')$ and $J_A^{-1}(0)/G=
A_{red}$.

For the first assertion in (2), note that the pull-back
$\iota^*\omega$ with respect to the inclusion $\iota: J^{-1}(0)\to
\Grd'$ is a closed, multiplicative 2-form on $J^{-1}(0)$, and one
can check that $\omega_{red}'$ is multiplicative from the equality
$p^*\omega_{red}'=\iota^*\omega$ and the fact that $p$ is a groupoid
homomorphism.

To compare IM 2-forms, recall that $\mu_{red}: A_{red}\to T^*(M/G)$
is defined as the $G$-quotient of the restriction (see
Prop.~\ref{prop:Ared})
$$
\mu|_{J_A^{-1}(0)}: J_A^{-1}(0) \to J_{can}^{-1}(0)=K^\circ.
$$
Consider $\underline{a}\in A_{red}$ and $\underline{X}\in T(M/G)$ at
a point $p(x) \in M/G$, and let $a \in J_A^{-1}(0)|_x$, $X\in T_xM$
be such that $\dd p(X) = \underline{X}$ and $\dd p (a) =
\underline{a}$. Then
$$
\omega_{red}'(\underline{a},\underline{X}) =
p^*\omega_{red}(a,X)=\iota^*\omega(a, X)= \mu(a)(X) =
\mu_{red}(\underline{a})(\underline{X}),
$$
which concludes the proof.
\end{proof}

When the 2-form $\omega$ on $\Grd$ is nondegenerate, then
 condition \eqref{eq:mommap} and the freeness of the $G$-action on
$\Grd$ guarantee that $0$ is a regular value for $J$. In general,
however, $\omega$ may be degenerate and \eqref{eq:mommap} may give
no information about the regularity of $J^{-1}(0)$. There is,
nevertheless, an alternative route for constructing a ``reduced''
Lie groupoid over $M/G$ that always works.

Let $\Grd_0$ be the source-simply-connected Lie groupoid integrating
the Lie algebroid $J_A^{-1}(0)$ (since $A$ is integrable, so is any
Lie subalgebroid of $A$). Since $J_A^{-1}(0)$ is a Lie subalgebroid
of $A$, $\Grd_0$ comes equipped with a groupoid homomorphism
$$
\iota_0: \Grd_0 \to \Grd,
$$
(which is an immersion but may fail to be injective, see e.g.
\cite{McMo}). The $G$-action on $J_A^{-1}(0)$ integrates to a
$G$-action on $\Grd_0$ for which $\iota_0$ is $G$-equivariant;
moreover, since the $G$-action on $M$ is free and proper, so is the
action on $\Grd_0$. The map $J\circ \iota_0: \Grd_0\to \frakg^*$ is
a multiplicative function whose infinitesimal counterpart $\dd
(J\circ \iota_0)|_{J_A^{-1}(0)}=J_A|_{J_A^{-1}(0)}$ vanishes. So
$J\circ \iota_0 =0$, i.e., $\iota_0(\Grd_0)\subseteq J^{-1}(0)$. As
a result, one may use \eqref{eq:mommap} to directly verify that
$\iota_0^*\omega$ is a closed multiplicative 2-form on $\Grd_0$
which is basic with respect to the $G$-action. Hence
$$
\Grd_{red} := \Grd_0/G
$$
is a smooth manifold, and it inherits a closed 2-form $\omega_{red}$
from $\iota_0^*\omega$.

\begin{proposition}\label{prop:globalred2}
The following holds:
\begin{enumerate}
\item $\Grd_{red}$ is a Lie groupoid over $M/G$ for which the quotient map $\Grd_0 \to
\Grd_{red}$ is a groupoid homomorphism, and whose Lie algebroid is
$A_{red}$.

\item The closed 2-form $\omega_{red}$ on $\Grd_{red}$ is
multiplicative, and it integrates the closed IM 2-form $\mu_{red}$
on $A_{red}$.
\end{enumerate}
\end{proposition}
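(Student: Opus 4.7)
The plan is to mirror the proof of Proposition~\ref{prop:globalred}, replacing the (possibly singular) submanifold $J^{-1}(0)$ by the source-simply-connected groupoid $\Grd_0$. By assumption the $G$-action on $\Grd_0$ is free, proper and by Lie-groupoid automorphisms, and $\iota_0^*\omega$ is closed, multiplicative and basic; these facts are all recorded in the paragraph preceding the statement. The only novelty compared with Proposition~\ref{prop:globalred} is that $\iota_0$ may fail to be injective, so one must work with $\Grd_0$ directly rather than with its image in $\Grd$.

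For part (1), the quotient $\Grd_{red}=\Grd_0/G$ is a smooth manifold because the $G$-action on $\Grd_0$ is free and proper, and the quotient map $p:\Grd_0\to \Grd_{red}$ is a surjective submersion. Since the action commutes with source, target, multiplication, inversion and unit, these structure maps descend to $\Grd_{red}$ and make $p$ a groupoid homomorphism over $M\to M/G$. The source map $\tilde{\sour}_{red}:\Grd_{red}\to M/G$ is a submersion because $\tilde{\sour}_{red}\circ p$ coincides with the composition of the submersions $\tilde{\sour}_0:\Grd_0\to M$ and $M\to M/G$. To identify the Lie algebroid, I would observe that $\dd p$ restricts at the unit section to a surjective Lie-algebroid morphism $J_A^{-1}(0)\to A(\Grd_{red})$; its kernel is trivial by the same argument used in the proof of Proposition~\ref{prop:globalred}, namely an element of $\ker(\dd p)\cap \ker(\dd \tilde{\sour}_0)$ at a unit must be an infinitesimal generator $u_{\Grd_0}$ with $\dd\tilde{\sour}_0(u_{\Grd_0})=u_M=0$, forcing $u=0$ by freeness. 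Hence $\dd p$ descends to an isomorphism $A_{red}=J_A^{-1}(0)/G\cong A(\Grd_{red})$.

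For part (2), multiplicativity of $\omega_{red}$ follows from the defining identity $p^*\omega_{red}=\iota_0^*\omega$: since $\iota_0$ is a groupoid morphism and $\omega$ is multiplicative, $\iota_0^*\omega$ is multiplicative on $\Grd_0$, and pulling the desired equation $m_{red}^*\omega_{red}=\pr_1^*\omega_{red}+\pr_2^*\omega_{red}$ back to $\Grd_0^{(2)}$ via the surjective submersion $p\times p$ reduces it to the multiplicativity identity for $\iota_0^*\omega$. To see that $\omega_{red}$ integrates $\mu_{red}$, fix $\underline{a}\in (A_{red})_{p(x)}$ and $\underline{X}\in T_{p(x)}(M/G)$, choose lifts $a\in J_A^{-1}(0)|_x\subseteq A(\Grd_0)_x$ and $X\in T_xM$ with $\dd p(a)=\underline{a}$ and $\dd p(X)=\underline{X}$, and compute
\begin{equation*}
\omega_{red}(\underline{a},\underline{X})= \iota_0^*\omega(a,X)= \omega\bigl(\dd\iota_0(a),\dd\iota_0(X)\bigr)= i_{X}\mu(a)= \mu_{red}(\underline{a})(\underline{X}),
\end{equation*}
using \eqref{eq:muomega} together with the fact that $\iota_0$ restricts to the identity on units and to the inclusion $J_A^{-1}(0)\hookrightarrow A$ on algebroids.

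The main technical obstacle is ensuring that the identification $A(\Grd_{red})\cong A_{red}$ really works in this setting. One cannot simply transport an algebroid structure along $\iota_0$, since its image is not a Lie subgroupoid in general; the resolution is to carry out the whole reduction intrinsically on $\Grd_0$, where the $G$-action lives, and to use freeness on $M$ exactly as in Proposition~\ref{prop:globalred} to control the kernel of $\dd p$ at units. Once this is done, the rest of the argument is formally identical to the proof of Proposition~\ref{prop:globalred}, with $J^{-1}(0)$ replaced by $\Grd_0$ and the inclusion into $\Grd$ replaced by $\iota_0$.
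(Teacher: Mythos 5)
Your proposal is correct and follows exactly the route the paper intends: the paper's own ``proof'' is the single remark that the argument is the same as for Proposition~\ref{prop:globalred} with $J^{-1}(0)$ replaced by $\Grd_0$, and you have carried out that adaptation faithfully, including the key point that one must work intrinsically on $\Grd_0$ rather than on the (possibly non-injectively immersed) image $\iota_0(\Grd_0)$. No gaps.
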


The proof of Proposition~\ref{prop:globalred2} is similar to that of
Proposition~\ref{prop:globalred}, but with no regularity assumptions
on $J$.

As discussed in \cite{FOR}, the reduced groupoids $\Grd_{red}$ or
$\Grd_{red}'$ generally do not agree with the
source-simply-connected integration of $A_{red}$;
\cite[Thm.~4.11]{FOR} shows how one can approach the problem of
finding obstructions.

\section{The case of Dirac structures}\label{sec:dirac}

Let $M$ be a manifold equipped with a Dirac structure $L$ and a free
and proper action of a Lie group $G$ by Dirac diffeomorphisms. As
explained in Section \ref{subsec:dirac}, this fits into the set-up
in $(i)$, $(ii)$, $(iii)$ as follows: $A$ is the natural Lie
algebroid defined on the vector-bundle $L\to M$ by \eqref{eq:LAstr}
and $\mu = \mu_L$ is the closed IM 2-form defined in
\eqref{eq:diracIM}. As in Section \ref{sec:infreduction}, let
$K\subseteq TM$ be the distribution tangent to the orbits of $G$ on
$M$, and let $K^\perp = TM\oplus K^\circ$ denote its orthogonal in
$TM\oplus T^*M$ relative to the canonical pairing \eqref{eq:pair}.
Note that
\begin{equation}\label{eq:JA2}
L\cap K^\perp = \{ (X,\alpha) \in L\;|\; \alpha\in K^\circ\}=
\mu^{-1}(K^\circ)=J_A^{-1}(0).
\end{equation}
Assuming that $L\cap K^\perp$ has constant rank, following
Prop.~\ref{prop:Ared}, one may reduce $A$ and $\mu$ to obtain
$A_{red}$ and $\mu_{red}$, where
\begin{equation}\label{eq:Ared}
A_{red} = J_A^{-1}(0)/G = L\cap K^\perp/G
\end{equation}
is a Lie algebroid over $M/G$ and $\mu_{red}$ is a closed IM 2-form
on $A_{red}$. On the other hand, one may consider the reduction of
$L$ in the sense of Dirac structures, recalled below, which produces
a new Dirac structure on $M/G$. We will now compare these two
possible reduction procedures.

\subsection{Comparing reductions}

The lifted $G$-action on $\mathbb{T}M$ (see \eqref{eq:lift})
preserves the bundles $K$, $K^\perp$, and $L$, and we have a natural
identification
\begin{equation}\label{eq:ident}
\frac{K^\perp}{K}\Big / G  = \frac{TM}{K}\oplus K^\circ \Big/ G\cong
T(M/G) \oplus T^*(M/G).
\end{equation}
Assuming that the intersection $L\cap K^\perp$ has constant
rank\footnote{This is equivalent to $L\cap K$ having constant
rank.}, the quotient
\begin{equation}\label{eq:Lq}
L_{quot}:= \frac{L\cap K^{\perp} + K}{K}\Big /G \subset
\frac{K^\perp}{K}\Big /G
\end{equation}
defines a Dirac structure on $M/G$, see \cite[Sec.~4]{BCG}.
Equivalently, the Dirac structure \eqref{eq:Lq} is the result of a
push-forward by the quotient map $p:M\to M/G$:
\begin{equation}\label{eq:pushf}
(L_{quot})_{p(x)} = \{(\dd p(X), \beta) \,|\, X\in T_xM,\, \beta\in
T^*_{p(x)}(M/G),\, (X,\dd p^*\beta) \in L_x \}.
\end{equation}
Being a Dirac structure, $L_{quot}$ carries a Lie-algebroid
structure over $M/G$ and is naturally equipped with the closed IM
2-form (c.f \eqref{eq:diracIM})
\begin{equation}\label{eq:muquot}
\mu_{quot} := \pr_{T^*}|_{L_{quot}} : L_{quot}\to T^*(M/G).
\end{equation}
A direct comparison of \eqref{eq:Ared} and \eqref{eq:Lq} indicates
that $A_{red}$ and $L_{quot}$ generally disagree. The following
simple example illustrates this fact.

\begin{example}
Consider the Dirac structure $L=TM$ and $\mu=\mu_L = 0$. One
directly verifies that
$$
A_{red}=TM/G,\;\;\; \mu_{red}=0,
$$
while $L_{quot}=T(M/G)$ and $\mu_{quot}=0$, so the reductions do not
match. In this case, $\rho_{red}$ is the natural projection $TM/G
\to T(M/G)$, so $\rho_{red}(A_{red})=L_{quot}$.
\end{example}

We assume henceforth that $L\cap K^\perp$ has constant rank, so that
both $A_{red}$ and $L_{quot}$ are well-defined. Let us consider the
map
\begin{equation}\label{eq:redmap}
r: A_{red}\to T(M/G)\oplus T^*(M/G),\;\; a \mapsto
(\rho_{red}(a),\mu_{red}(a)).
\end{equation}
The following result relates $A_{red}$, $\mu_{red}$ and $L_{quot}$,
$\mu_{quot}$ in general.
\begin{lemma}\label{lem:2red}
The image of the map $r$ is $L_{quot}$ and its kernel is $(K\cap L)
/ G$. Moreover, $r: A_{red}\to L_{quot}$ is a Lie-algebroid morphism
and $\mu_{quot}\circ r = \mu_{red}$.
\end{lemma}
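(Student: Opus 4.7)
My plan is to unwind the definitions pointwise and handle sections only for the final bracket check. Any $a \in (A_{red})_{p(x)}$ lifts uniquely to some $(X,\alpha) \in (L \cap K^\perp)_x$ under the isomorphism $p^*A_{red} \cong L \cap K^\perp$; since $\alpha \in K^\circ_x$, I will write $\alpha = (\dd p|_x)^*\bar\beta$ for a unique $\bar\beta \in T^*_{p(x)}(M/G)$, so that by construction $\rho_{red}(a) = \dd p(X)$ and $\mu_{red}(a) = \bar\beta$, giving
\begin{equation*}
r(a) = (\dd p(X),\bar\beta), \qquad \text{with} \qquad (X,(\dd p)^*\bar\beta) \in L_x.
\end{equation*}

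From this formula the first three assertions will follow immediately. The condition $(X,(\dd p)^*\bar\beta) \in L_x$ is exactly \eqref{eq:pushf} asserting $r(a) \in L_{quot}$; reversing this argument exhibits every element of $L_{quot}$ as some $r(a)$, so $\image(r) = L_{quot}$. For the kernel, $r(a) = 0$ will force $\dd p(X) = 0$ and $\bar\beta = 0$, i.e.\ $X \in K_x$ and $\alpha = 0$, giving $\ker(r) = (L \cap K)/G$ (with $K \subset \mathbb{T}M$ identified with $K\oplus 0$). The identity $\mu_{quot} \circ r = \mu_{red}$ is immediate from $\mu_{quot} = \pr_{T^*}|_{L_{quot}}$ and the definition of $r$.

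The remaining point is to check that $r: A_{red} \to L_{quot}$ is a Lie-algebroid morphism over the identity. Anchor compatibility is automatic since $\pr_T \circ r = \rho_{red}$. For bracket compatibility, I will take $a, b \in \Gamma(A_{red})$ and write $p^*a = (X, p^*\bar\beta)$, $p^*b = (Y, p^*\bar\delta)$, where $X, Y$ are $G$-invariant and thus $p$-projectable vector fields covering $\bar X := \rho_{red}(a)$ and $\bar Y := \rho_{red}(b)$, while $\bar\beta := \mu_{red}(a)$ and $\bar\delta := \mu_{red}(b)$. The Courant bracket on $L$ then yields
\begin{equation*}
[p^*a, p^*b]_L = \left([X,Y],\, \Lie_X p^*\bar\delta - i_Y \dd p^*\bar\beta\right) = \left([X,Y],\, p^*(\Lie_{\bar X}\bar\delta - i_{\bar Y}\dd\bar\beta)\right),
\end{equation*}
where projectability of $X$ and $Y$ is used to replace $\Lie_X p^*$ by $p^*\Lie_{\bar X}$ and $i_Y p^*\dd$ by $p^*i_{\bar Y}\dd$. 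Combined with the defining relation $p^*[a,b]_{red} = [p^*a, p^*b]_L$, this reads off $\rho_{red}([a,b]_{red}) = [\bar X, \bar Y]$ and $\mu_{red}([a,b]_{red}) = \Lie_{\bar X}\bar\delta - i_{\bar Y}\dd\bar\beta$, whence $r([a,b]_{red}) = \Cour{r(a), r(b)}$, the Courant bracket restricted to $L_{quot}$. The only real obstacle is keeping the projectability identities straight; once these are in hand, no additional ideas are needed beyond Proposition \ref{prop:Ared} and the pushforward description of $L_{quot}$.
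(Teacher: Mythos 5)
Your proposal is correct, and for the bulk of the lemma (the pointwise description of $A_{red}$ as pairs $(\underline{X},\underline{\alpha})$ with $(X,(\dd p)^*\underline{\alpha})\in L_x$, the identification of the image via \eqref{eq:pushf}, the kernel computation, and $\mu_{quot}\circ r=\mu_{red}$) it coincides with the paper's argument. The one place where you take a different route is the bracket compatibility: the paper stays downstairs on $M/G$ and simply observes that, since $\rho_{red}$ preserves brackets and $\mu_{red}$ satisfies \eqref{eq:IM1}--\eqref{eq:IM2}, the map $a\mapsto(\rho_{red}(a),\mu_{red}(a))$ automatically intertwines $[\cdot,\cdot]_{red}$ with the Courant bracket (this is the same general principle as in the footnote to \eqref{eq:diracIM}). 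You instead work upstairs: you represent sections of $A_{red}$ by $G$-invariant sections $(X,p^*\bar\beta)$ of $L\cap K^\perp$ (legitimate, since a $G$-invariant $\alpha\in\Gamma(K^\circ)$ is basic), compute $\Cour{\cdot,\cdot}$ there, and push down using projectability of $X,Y$. Both verifications are short and valid; the paper's is slightly more structural in that it reuses Proposition~\ref{prop:Ared} verbatim, while yours makes the descent of the Courant bracket explicit and does not require quoting the IM identities for $\mu_{red}$ again.
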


\begin{proof}
From \eqref{eq:Ared}, we have
$$
A_{red} = (K^\perp \cap L)/G = ((TM\oplus K^\circ)\cap L)/G.
$$
Let us consider the natural vector-bundle maps
$$
TM \to TM/G,\; X\mapsto \underline{X},\; \;\;\; K^\circ \to
K^\circ/G \cong T^*(M/G),\; \alpha \mapsto \underline{\alpha},
$$
as well as $\dd p: TM \to T(M/G)$, all covering the quotient map $p:
M\to M/G$. For $X \in T_xM$ and $\alpha \in (K^\circ)_x$, we have
$\underline{\alpha}(\dd p (X)) = \alpha(X)$, i.e., $(\dd p)^*
\underline{\alpha} = \alpha$. We can write
\begin{align*}
(A_{red})_{p(x)} & = \{ (\underline{X},\underline{\alpha})\,|\, X\in
T_xM, \alpha \in (K^\circ)_x, \, (X,\alpha)\in L_x \}\\
& = \{ (\underline{X},\underline{\alpha})\,|\, X\in T_xM, \, (X,(\dd
p)^* \underline{\alpha})\in L_x \}.
\end{align*}
With respect to this description of $A_{red}$, we have (c.f.
\eqref{eq:rhored})
\begin{equation}\label{eq:rhomu}
\rho_{red}(\underline{X},\underline{\alpha}) = \dd p (X), \;\;\;
\mu_{red}(\underline{X},\underline{\alpha}) = \underline{\alpha}.
\end{equation}
Using \eqref{eq:pushf}, we see that the image of $(A_{red})_{p(x)}$
under \eqref{eq:redmap} is
$$
\{ (\dd p(X), \underline{\alpha} ) \;|\; X\in T_xM, \, (X,(\dd p)^*
\underline{\alpha})\in L_x  \} = (L_{quot})_{p(x)}.
$$

Recalling that $K=\ker(\dd p)$, it is clear from \eqref{eq:rhomu}
that
$$
\ker (r) = \ker (\rho_{red})\cap \ker (\mu_{red}) = (K \cap L)/G.
$$
The fact that $r: A_{red}\to L_{quot}$ preserves Lie-algebroid
structures follows from $\rho_{red}$ preserving Lie brackets as well
as conditions \eqref{eq:IM1}, \eqref{eq:IM2} for $\mu_{red}$. The
compatibility $\mu_{quot}\circ r =\mu_{red}$ is immediate from
\eqref{eq:muquot}.
\end{proof}

The next result follows directly from Lemma~\ref{lem:2red}.

\begin{theorem}\label{thm:red}
The map $r$ defines a Lie-algebroid isomorphism from $A_{red}$ to
$L_{quot}$, such that $\mu_{quot}\circ r = \mu_{red}$,
 if and only if $K\cap L=\{0\}$.
\end{theorem}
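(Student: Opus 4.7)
The strategy is to observe that Lemma~\ref{lem:2red} has already done essentially all of the work, so the theorem reduces to a statement about when the kernel of $r$ is zero. I would first note that by Lemma~\ref{lem:2red} the map $r: A_{red}\to L_{quot}$ is a surjective Lie-algebroid morphism automatically satisfying $\mu_{quot}\circ r = \mu_{red}$. Hence $r$ is a Lie-algebroid isomorphism if and only if it is injective, i.e., if and only if $\ker(r)=\{0\}$.

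Next, again by Lemma~\ref{lem:2red}, $\ker(r)=(K\cap L)/G$ as a subbundle of $A_{red}$ over $M/G$. Since the $G$-action on $M$ is assumed to be free and proper and the $G$-action on $L$ preserves $K$ and $L$ (recall \eqref{eq:lift}), the bundle $K\cap L\to M$ descends to $(K\cap L)/G\to M/G$, and this quotient is the zero bundle if and only if $K\cap L$ is itself the zero subbundle of $\mathbb{T}M$. This gives the equivalence $\ker(r)=\{0\}\Longleftrightarrow K\cap L=\{0\}$.

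Combining the two steps, $r$ is a Lie-algebroid isomorphism exactly when $K\cap L=\{0\}$, proving the theorem. There is no substantive obstacle: once Lemma~\ref{lem:2red} identifies $r$ as a surjective morphism with kernel $(K\cap L)/G$, the theorem is a one-line consequence. The only minor point worth flagging in writing is the use of freeness of the action to pass from triviality of $(K\cap L)/G$ over $M/G$ to triviality of $K\cap L$ over $M$.
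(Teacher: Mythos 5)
Your proposal is correct and follows exactly the route the paper intends: the paper states that Theorem~\ref{thm:red} ``follows directly from Lemma~\ref{lem:2red},'' and your argument simply spells out that directness (surjectivity plus $\mu_{quot}\circ r=\mu_{red}$ from the lemma, so isomorphism is equivalent to vanishing of $\ker(r)=(K\cap L)/G$, which by freeness is equivalent to $K\cap L=\{0\}$). Nothing is missing.
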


We have the following consequences of Thm.~\ref{thm:red}:

\begin{enumerate}

\item Whenever $A_{red}$ and $L_{quot}$ do not coincide, i.e., $r$
fails to be an isomorphism, the Lie groupoid
$(\Grd_{red},\omega_{red})$ (resp. $(\Grd_{red}',\omega_{red}')$,
provided $J^{-1}(0)$ is a smooth submanifold) constructed in
Section~\ref{sec:globalred} is \textit{not} a presymplectic groupoid
for $L_{quot}$; by Thm.~\ref{thm:red}, this happens if and only if
$K\cap L=\{0\}$. In general, $(\Grd_{red},\omega_{red})$ (resp.
$(\Grd_{red}',\omega_{red}')$) is an \textit{over-presymplectic
groupoid} in the sense of \cite[Def.~4.5]{bcwz}; in particular, the
target map $\tar: (\Grd_{red},\omega_{red}) \to (M/G,L_{quot})$ is
always a forward Dirac map.

\item If $L$ is the graph of a Poisson structure $\pi$ on $M$, then
it immediately follows that $K\cap L=\{0\}$ and $L\cap K^\perp$ has
constant rank. In this case, $L_{quot}$ is just the graph of the
natural Poisson structure $\pi_{quot}$ on $M/G$, uniquely determined
by $p:M\to M/G$ being a Poisson map. By Thm.~\ref{thm:red},
$A_{red}$ is identified with $L_{quot}$ via $r$, and
$(\Grd_{red},\omega_{red})$ is a symplectic groupoid for
$\pi_{quot}$; since $0$ is automatically a regular value for $J$ in
this case, one may also consider the (possibly different) symplectic
groupoid  $(\Grd_{red}',\omega_{red}')$. This situation is fully
treated and further developed in \cite{FOR}.

\item Let us suppose that the Dirac structure $L$ on $M$ is such that
$L_{quot}$ is given by the graph of a Poisson structure
$\pi_{quot}$. In this case, we note that $r$ defines an isomorphism
$A_{red}\cong L_{quot}$ if and only if $L$ is itself the graph of a
Poisson structure, i.e., $L\cap TM=\{0\}$; indeed, it follows from
\eqref{eq:pushf} that
$$
L_{quot}\cap T(M/G) = \dd p (L\cap TM),
$$
so $L_{quot}\cap T(M/G)=\{0\}$ if and only if $L\cap TM \subseteq
K=\ker(\dd p)$. But since $L\cap K=\{0\}$, it follows that $L\cap
TM=\{0\}$.

As a consequence, the reduced groupoid $(\Grd_{red},\omega_{red})$
(resp. $(\Grd_{red}',\omega_{red}')$) is \textit{not} a symplectic
groupoid for $\pi_{quot}$ unless $L$ is the graph of a Poisson
structure to begin with. This observation indicates that the
reduction in \cite[Thm.~4.21]{FOR} for non-free Poisson actions may
not generally yield symplectic groupoids on each strata (just
\textit{over}-symplectic \cite[Def.~4.5]{bcwz}).
\end{enumerate}

\subsection{A non-integrable quotient}\label{subsec:nonint}

Let us keep considering a manifold $M$ equipped with a Dirac
structure $L$ and carrying a free and proper $G$-action by Dirac
diffeomorphisms. We saw in Thm.~\ref{thm:red} that $A_{red}$ and
$L_{quot}$ do not coincide in general. We now illustrate how
different these Lie algebroids can be concerning integrability.

It is a direct consequence of Prop.~\ref{prop:globalred2} that, if
$L$ is integrable as a Lie algebroid, then so is $A_{red}$. It may
happen, however, that the quotient Dirac structure $L_{quot}$ is
\textit{not} integrable as a Lie algebroid. Note that this feature
is not present when $L$ is defined by a Poisson structure, as in
this case $L_{quot}$ necessarily coincides with $A_{red}$; i.e, the
quotient of an integrable Poisson structure by a free and proper
$G$-action by Poisson diffeomorphisms is always integrable (see
\cite[Prop.~4.6]{FOR}).  We will verify how this picture changes in
the realm of Dirac structures through a concrete example.

Recall that $P= S^2 \times \mathbb{R}$, with coordinates $(x,t)$,
may be equipped with a Poisson structure $\pi_P$ that does not admit
an integration to a symplectic groupoid \cite{We87} (see also
\cite{CF2}). This nonintegrable Poisson structure is characterized
by the fact that its symplectic leaves are the spheres obtained by
constant values of $t$ endowed with $(1+t^2)$ times the usual
symplectic structure of $S^2$.

\begin{proposition}\label{prop:nonint}
There exists a presymplectic manifold $(M,\omega)$ carrying a free
and proper $G$-action preserving $\omega$ for which $P=M/G$ and the
quotient map $M\to P$ is a forward Dirac map; i.e., if $L$ is the
graph of $\omega$, then $L_{quot}$ is the graph of $\pi_P$.
\end{proposition}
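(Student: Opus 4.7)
The plan is to build $(M,\omega)$ explicitly as a principal $G$-bundle over $P=S^2\times\mathbb{R}$, using the Hopf fibration to make $\omega_{S^2}$ exact upstairs and an auxiliary $\mathbb{R}$-factor to produce the kernel direction $\partial_t$ of $\pi_P$ upon quotienting. Let $\pi_H:S^3\to S^2$ be the Hopf map and fix an $S^1$-invariant connection 1-form $\alpha$ on $S^3$ whose curvature is $\dd\alpha=\pi_H^*\omega_{S^2}$, with $\omega_{S^2}$ normalized so that the symplectic leaf of $\pi_P$ at level $t$ is $(S^2\times\{t\},(1+t^2)\omega_{S^2})$. Take
$$
M:=S^3\times\mathbb{R}^2,\qquad G:=S^1\times\mathbb{R},
$$
with $G$ acting via the Hopf $S^1$-action on the first factor and by translation of the second $\mathbb{R}$-coordinate $s$. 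The action is free and proper, $M/G=S^2\times\mathbb{R}=P$ with quotient map $p(y,t,s)=(\pi_H(y),t)$, and the 2-form
$$
\omega:=\dd\bigl((1+t^2)\alpha\bigr)+\dd t\wedge\dd s=(1+t^2)\dd\alpha+2t\,\dd t\wedge\alpha+\dd t\wedge\dd s
$$
is exact (hence closed) and $G$-invariant since $\alpha$ is $S^1$-basic and the remaining terms are independent of $s$.

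The main step is to verify the forward Dirac condition, i.e.\ that the pushforward Dirac structure $L_{quot}$ of $L=\mathrm{graph}(\omega)$ via $p$ coincides with $\mathrm{graph}(\pi_P)$. I would decompose an arbitrary tangent vector on $M$ as $X=X_h+a\,\partial_\theta+e\,\partial_t+b\,\partial_s$, with $X_h$ horizontal for the Hopf connection and $\partial_\theta$ the Hopf generator, and a covector on $P$ as $\beta=\beta_0+c\,\dd t$ with $\beta_0\in T^*S^2$. A direct contraction using $\iota_{\partial_\theta}\dd\alpha=0$ gives
$$
\iota_X\omega=(1+t^2)\,\pi_H^*\bigl(\iota_{\dd\pi_H(X_h)}\omega_{S^2}\bigr)+2te\,\alpha-(2ta+b)\,\dd t+e\,\dd s.
$$
Imposing $\iota_X\omega=p^*\beta=\pi_H^*\beta_0+c\,\dd t$ then forces $e=0$ from the $\dd s$-component (which also kills the $\alpha$-term), determines $\dd\pi_H(X_h)=(1+t^2)^{-1}(\omega_{S^2})^{-1}(\beta_0)$ and $b=-2ta-c$, with $a$ free. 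Since $\dd p(X)=\dd\pi_H(X_h)=\pi_P^\sharp(\beta)$, every $(\pi_P^\sharp(\beta),\beta)$ lies in $L_{quot}$, and a rank count ($\mathrm{rank}\,L_{quot}=\dim P=3$) gives $L_{quot}=\mathrm{graph}(\pi_P)$.

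Before invoking \eqref{eq:pushf} I would also verify that $L\cap K^\perp\subseteq\mathbb{T}M$ has constant rank, so that $L_{quot}$ is a well-defined Dirac structure: from the same formula, the conditions $\iota_X\omega(\partial_\theta)=2te=0$ and $\iota_X\omega(\partial_s)=e=0$ both collapse to $e=0$, giving constant rank $\dim M-1=4$. The main obstacle is the behaviour of the pushforward at the critical value $t=0$. Without the auxiliary $\mathbb{R}$-factor (e.g.\ setting $\omega=\dd((1+t^2)\alpha)$ on $S^3\times\mathbb{R}$) both the $\alpha$ and $\dd t$ coefficients of $\iota_X\omega$ degenerate at $t=0$ and covectors of the form $c\,\dd t$ cannot be realized; the $\dd t\wedge\dd s$-term is precisely what absorbs such covectors through the coefficient $b$, and this is the subtlety that must be pinned down to secure $L_{quot}=\mathrm{graph}(\pi_P)$ uniformly in $t$.
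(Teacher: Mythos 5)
Your construction is correct, and it is genuinely different from the one in the paper. The paper invokes the general mechanism of \cite[Example~6.8]{bcwz}: it realizes $(S^2\times\mathbb{R},\pi_P)$ as the level set $C^{-1}(1/2)$ of the quadratic Casimir associated with the indefinite invariant form $B_{\mathfrak{su}(2)}-B_{\mathbb{R}}$ on $\frakg=\mathfrak{su}(2)\oplus\mathbb{R}$, takes $M=\mathrm{SU}(2)\times\mathbb{R}\times C^{-1}(1/2)$ (a $7$-dimensional manifold with a $4$-dimensional group acting) inside $T^*G\cong G\times\frakg^*$, and proves $L_{quot}=\mathrm{graph}(\pi_\lambda)$ via Poisson-geometric arguments (Casimirs, Hamiltonian vector fields, the fact that $p$ and $j_\lambda$ are Poisson maps). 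You instead build a minimal explicit model: the principal $(S^1\times\mathbb{R})$-bundle $S^3\times\mathbb{R}^2\to S^2\times\mathbb{R}$ with the exact invariant form $\dd((1+t^2)\alpha)+\dd t\wedge\dd s$, and verify the pushforward by a direct contraction. I checked your computation of $i_X\omega$ and the resulting conditions ($e=0$ from the $\dd s$- and $\alpha$-components, $\dd\pi_H(X_h)=((1+t^2)\omega_{S^2})^{-1}(\beta_0)=\pi_P^\sharp(\beta_0)$, $b=-2ta-c$ with $a$ free), as well as the constant-rank statement for $L\cap K^\perp$; all of it is right, and in fact your argument shows both inclusions directly, so the rank count is only a safety net. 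What the paper's route buys is generality (it works for any $\frakg$ with an invariant metric and simultaneously identifies $A_{red}$ with the action algebroid $\frakg\ltimes C^{-1}(\lambda)$, cf.\ Remark~\ref{rem:Ared}); what yours buys is a lower-dimensional, elementary example in which the geometric mechanism is transparent --- in particular your closing observation that the auxiliary $\dd t\wedge\dd s$ term is exactly what realizes the covectors $c\,\dd t$ at the degenerate level $t=0$ pinpoints why a na\"{\i}ve prequantization-type circle bundle over $P$ would fail.
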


Observe that whenever $L$ is the graph of a presymplectic structure,
it is automatically integrable as a Lie algebroid, as the projection
$\pr_T|_L : L\to TM$ is a Lie-algebroid isomorphism, so $L$ is
integrated by the fundamental groupoid of $M$, see e.g. \cite{CDW}.
As already mentioned, since $\pi_P$ is nonintegrable, it is
impossible to replace $\omega$ in Prop.~\ref{prop:nonint} by a
symplectic form (or any other integrable Poisson structure), so
$\omega$ must be degenerate.

The proof of Prop.~\ref{prop:nonint} is a direct consequence of the
construction in \cite[Example~6.8]{bcwz}, that we recall for
completeness.

Let $\frakg$ be a Lie algebra endowed with an $\Ad$-invariant,
symmetric and nondegenerate bilinear form $B: \frakg\times \frakg
\to \mathbb{R}$. We use $B$ to identify $\frakg^* \cong \frakg$, in
such a way that the adjoint and co-adjoint actions are intertwined.
We consider the function
$$
C: \frakg^* \to \mathbb{R}, \;\; \xi \mapsto \frac{1}{2}B(\xi,\xi).
$$
This function is a Casimir for the linear Poisson structure
$\pi_{\frakg^*}$ on $\frakg^*$, as a result of the $\Ad$-invariance
of $B$. It follows that $C^{-1}(\lambda)$ is a Poisson submanifold
of $\frakg^*$ for any $\lambda\in \mathbb{R}$, i.e.,
$\pi_{\frakg^*}$ restricts to a Poisson structure $\pi_\lambda$ on
$C^{-1}(\lambda)$ for which the inclusion
$$
j_\lambda : C^{-1}(\lambda) \to \frakg^*
$$
is a Poisson map.

Let $G$ be a connected Lie group whose Lie algebra is $\frakg$, and
let $G\times \frakg^*$ be endowed with the symplectic structure
$\Omega$ coming from the identification $G\times \frakg^* \cong
T^*G$ via right translations, so that the projection on the second
factor,
$$
p: G\times \frakg^* \to \frakg^*,
$$
is a Poisson map. The submanifold
$M_\lambda = (C\circ p)^{-1}(\lambda) = G\times C^{-1}(\lambda)$ of
$G\times \frakg^*$ is equipped with a closed 2-form
$\omega_\lambda$, given by the pull-back of $\Omega$ by the
inclusion
$$
\iota_\lambda: M_\lambda \hookrightarrow G\times \frakg^*,
$$
and carries a free and proper $G$-action (by right multiplication on
the first factor) so that $M_\lambda/G = C^{-1}(\lambda)$. We denote
the quotient map by $ p_\lambda : M_\lambda \to C^{-1}(\lambda)$, so
that
$$
j_\lambda\circ p_\lambda = p\circ \iota_\lambda.
$$

Let $L$ be the Dirac structure on $M_\lambda$ given by the graph of
$\omega_\lambda$. Consider the distribution $K = \ker(p_\lambda)=TG
\subseteq TM_{\lambda}$. Since $M_{\lambda}\subset G\times
\frakg^*$, we may view $K$ in $T(G\times \frakg^*)$ and consider its
symplectic orthogonal $K^\Omega \subseteq T(G\times \frakg^*)$.

\begin{lemma}\label{lem:Komega}
$K^\Omega\subseteq TM_{\lambda}$ and $L\cap K^\perp =
\{(X,i_X\omega_{\lambda})\,|\, X\in K^\Omega\}$. In particular,
$L\cap K^\perp$ has constant rank.
\end{lemma}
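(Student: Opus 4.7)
The plan is to reduce everything to a single key observation: the Hamiltonian vector field $X_{C\circ p}$ of $C\circ p$ on the symplectic manifold $(G\times \frakg^*, \Omega)$ is a section of $K$. This uses the fact that $p$ is a Poisson map and $C$ is a Casimir on $(\frakg^*, \pi_{\frakg^*})$: the Hamiltonian vector field $X_C$ on $\frakg^*$ vanishes, and $p$-relatedness of Hamiltonian vector fields under Poisson maps forces $\dd p(X_{C\circ p}) = X_C\circ p = 0$, so $X_{C\circ p}\in K$.

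From this, the first assertion follows immediately: $TM_\lambda=\ker(\dd(C\circ p))$ and $i_{X_{C\circ p}}\Omega=\dd(C\circ p)$, so a vector $X\in T(G\times \frakg^*)|_{M_\lambda}$ lies in $TM_\lambda$ iff $\Omega(X_{C\circ p},X)=0$. If $X \in K^\Omega$, then $\Omega(Y,X)=0$ for every $Y\in K$; applying this to $Y=X_{C\circ p}\in K$ gives $X\in TM_\lambda$, so $K^\Omega\subseteq TM_\lambda$.

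For the second assertion, I would unpack the definitions. Any element of $L$ has the form $(X, i_X\omega_\lambda)$ with $X\in TM_\lambda$, and since $K^\perp = TM_\lambda\oplus K^\circ$, such a pair lies in $L\cap K^\perp$ iff $\omega_\lambda(X,Y)=0$ for all $Y\in K\subseteq TM_\lambda$. Using $\omega_\lambda=\iota_\lambda^*\Omega$, this is equivalent to $\Omega(X,Y)=0$ for all $Y\in K$, i.e.\ $X\in K^\Omega$. The first assertion guarantees such $X$ automatically sits in $TM_\lambda$, so the correspondence $X\mapsto (X, i_X\omega_\lambda)$ is a bijection $K^\Omega\to L\cap K^\perp$. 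Since $K$ has constant rank $\dim G$ inside the $2\dim G$-dimensional symplectic manifold $(G\times \frakg^*,\Omega)$, nondegeneracy of $\Omega$ forces $K^\Omega$ to have constant rank $\dim G$, and the bijection transports this to $L\cap K^\perp$.

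The only nontrivial point is the key observation $X_{C\circ p}\in K$, which ties the Casimir property of $C$ to the Poisson nature of $p$; once it is in hand, both statements are formal manipulations with the definitions and the nondegeneracy of $\Omega$. The only bit of care required is in correctly locating $K^\Omega$ in the ambient bundle $T(G\times \frakg^*)|_{M_\lambda}$ before the first assertion tells us it already lies in $TM_\lambda$.
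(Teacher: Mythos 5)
Your proof is correct, and its second half (unpacking $L\cap K^\perp$ via $K^\perp=TM_\lambda\oplus K^\circ$ and translating $i_X\omega_\lambda\in K^\circ$ into $X\in K^\Omega\cap TM_\lambda=K^\Omega$) coincides with the paper's argument. For the first inclusion $K^\Omega\subseteq TM_\lambda$ you take a dual route: the paper describes $K^\Omega$ explicitly as the span of the Hamiltonian vector fields $X_{p^*f}$, $f\in C^\infty(\frakg^*)$, and checks that each is annihilated by $\dd(C\circ p)$ using $\dd p(X_{p^*f})=X_f$ and the Casimir property of $C$; you instead exhibit a single distinguished section of $K$, namely $X_{C\circ p}$ (which lies in $\ker\dd p$ precisely because $\dd p(X_{C\circ p})=X_C=0$), and observe that pairing it against any $X\in K^\Omega$ via $\Omega$ computes $\dd(C\circ p)(X)=0$. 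Both arguments rest on exactly the same two facts --- $p$ is a Poisson map and $C$ is a Casimir --- but yours is marginally more economical, since it avoids having to justify the spanning description of $K^\Omega$ (which in turn relies on $K^\circ$ being spanned by the differentials $\dd(p^*f)$ along the fibers of the submersion $p$). Like the paper, you implicitly use that $TM_\lambda=\ker\dd(C\circ p)|_{M_\lambda}$, i.e.\ that $\lambda$ is a regular value; this is harmless here. Your count for the constant-rank conclusion is also correct.
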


\begin{proof}
The distribution $K^\Omega$ is spanned by hamiltonian vector fields
$X_{p^*f}$ of functions of the form $p^*f$, for $f\in
C^\infty(\frakg^*)$. Since $p$ is a Poisson map, we have $\dd p
(X_{p^*f}) = X_f$. It follows that
$$
\dd C (\dd p (X_{p^*f}))= \dd C (X_f)=0,
$$
for $C$ is a Casimir. Hence $K^\Omega \subseteq \ker(\dd (C\circ
p))=TM_{\lambda}$.

Since $K^\perp=TM_{\lambda}\oplus K^\circ$,  $L\cap K^\perp =
\{(X,i_X\omega_\lambda)\,|\, X\in TM_{\lambda},\,
i_X\omega_{\lambda}\in K^\circ\}$. But one can directly check that
$i_X\omega_{\lambda}\in K^\circ$ if and only if $X\in
TM_{\lambda}\cap K^\Omega = K^\Omega$.
\end{proof}

Let $L_{quot}$ be the reduction of $L$ as in \eqref{eq:Lq},
\eqref{eq:pushf}.

\begin{lemma}\label{lem:pf}
$L_{quot}$ is the graph of $\pi_\lambda$.
\end{lemma}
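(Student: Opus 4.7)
The plan is to combine the pushforward formula \eqref{eq:pushf} for $L_{quot}$ with the explicit description of $L\cap K^\perp$ provided by Lemma~\ref{lem:Komega}, exploiting that both $p:G\times\frakg^*\to \frakg^*$ and the inclusion $j_\lambda:C^{-1}(\lambda)\hookrightarrow \frakg^*$ are Poisson maps. First I would fix $\xi\in C^{-1}(\lambda)$ and $m\in p_\lambda^{-1}(\xi)$ and unwind what membership in $(L_{quot})_\xi$ means: by \eqref{eq:pushf}, an element $(\overline X,\beta)$ of $(L_{quot})_\xi$ comes from some $X\in T_mM_\lambda$ with $i_X\omega_\lambda=p_\lambda^*\beta$ and $\overline X=\dd p_\lambda(X)$, and since $p_\lambda^*\beta\in K^\circ$, Lemma~\ref{lem:Komega} forces $X\in K^\Omega_m$.

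Next, given $\beta\in T^*_\xi C^{-1}(\lambda)$, I would choose $f\in C^\infty(\frakg^*)$ with $j_\lambda^*\dd f|_\xi=\beta$ and set $X:=X_{p^*f}|_m$, the Hamiltonian vector field of $p^*f$ with respect to $\Omega$. As already observed in the proof of Lemma~\ref{lem:Komega}, $X\in K^\Omega_m\subseteq T_mM_\lambda$. Using $\omega_\lambda=\iota_\lambda^*\Omega$ together with $j_\lambda\circ p_\lambda=p\circ\iota_\lambda$, a direct computation gives
\[
i_X\omega_\lambda \,=\, \iota_\lambda^*(\dd(p^*f)) \,=\, (p\circ\iota_\lambda)^*\dd f \,=\, p_\lambda^*(j_\lambda^*\dd f) \,=\, p_\lambda^*\beta
\]
at $m$, so $(X,p_\lambda^*\beta)\in L_m$. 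Because $p$ is Poisson, $\dd p(X_{p^*f})=\pi_{\frakg^*}^\sharp(\dd f)$ at $\xi$; because $C^{-1}(\lambda)$ is a Poisson submanifold with $j_\lambda$ a Poisson map, this vector lies in $\dd j_\lambda(T_\xi C^{-1}(\lambda))$ and equals $\dd j_\lambda(\pi_\lambda^\sharp(\beta))$. Combined with the identity $\dd j_\lambda\circ\dd p_\lambda=\dd p\circ\dd\iota_\lambda$ and the fact that $X\in T_mM_\lambda$, this identifies $\dd p_\lambda(X)=\pi_\lambda^\sharp(\beta)$, and hence $(\pi_\lambda^\sharp(\beta),\beta)\in (L_{quot})_\xi$.

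Finally, to conclude, I would observe that the preceding step shows the graph of $\pi_\lambda$ is contained in $L_{quot}$; since both are Dirac structures on $C^{-1}(\lambda)$ and therefore Lagrangian subbundles of the same rank $\dim C^{-1}(\lambda)$ in $T C^{-1}(\lambda)\oplus T^*C^{-1}(\lambda)$, equality follows. The main obstacle is essentially bookkeeping: producing, for each covector $\beta$, a concrete $X\in K^\Omega_m$ that projects to $\pi_\lambda^\sharp(\beta)$ and pairs correctly with $\omega_\lambda$. Once Lemma~\ref{lem:Komega} tells us that $K^\Omega$ is locally spanned by Hamiltonian vector fields of functions of the form $p^*f$, the identification $\dd p_\lambda(X_{p^*f})=\pi_\lambda^\sharp(j_\lambda^*\dd f)$ reduces to the standard fact that Hamiltonian vector fields on a symplectic manifold project, under a Poisson submersion onto a Poisson submanifold, to the Hamiltonian vector fields of the pushed-down function.
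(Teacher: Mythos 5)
Your proof is correct, and it uses the same basic ingredients as the paper's (the pushforward description \eqref{eq:pushf}, the fact that $p$ and $j_\lambda$ are Poisson maps, the Casimir property of $C$, and an equal-rank argument to upgrade one inclusion to equality), but it runs the inclusion in the opposite direction. The paper shows $L_{quot}\subseteq \mathrm{graph}(\pi_\lambda)$: it starts from an \emph{arbitrary} $X\in T_mM_\lambda$ with $i_X\omega_\lambda=\dd p_\lambda^*f$, extends $f$ to $\hat f$ on $\frakg^*$, and must then account for the ambiguity of the extension via the correction term $i_X\Omega=\dd p^*\hat f+k\,\dd(p^*C)$, which is killed under $\dd p$ because $C$ is a Casimir. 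You instead show $\mathrm{graph}(\pi_\lambda)\subseteq L_{quot}$ constructively: for each covector $\beta$ you pick an extension $f$ and take the specific representative $X=X_{p^*f}$, which by (the proof of) Lemma~\ref{lem:Komega} lies in $K^\Omega\subseteq TM_\lambda$, so the extension ambiguity never arises. Your direction is arguably a bit cleaner for exactly that reason; the price is that you must justify that every $\beta\in T^*_\xi C^{-1}(\lambda)$ extends (immediate, since $j_\lambda$ is an embedding) and that $\dd j_\lambda$ is injective when you descend $\dd p(X)=\pi^\sharp_{\frakg^*}(\dd f)$ to $\pi_\lambda^\sharp(\beta)$, both of which you handle. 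The final step is also sound: $L_{quot}$ is a Dirac structure on $C^{-1}(\lambda)$ (by \eqref{eq:Lq} together with the constant-rank statement of Lemma~\ref{lem:Komega}), hence has the same rank as $\mathrm{graph}(\pi_\lambda)$, so either inclusion forces equality.
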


\begin{proof}
For each $\xi \in C^{-1}(\lambda)$ and $\sigma \in G$, we have
$$
(L_{quot})_\xi = \{ (\dd p_\lambda(X), (\dd f)_\xi)\,|\, X\in
T_{(\sigma,\xi)}M_\lambda,\, f\in C^\infty(C^{-1}(\lambda)),\,
\mbox{ and } \, i_X \omega_\lambda = \dd p_\lambda^*f \}.
$$
We must compare it with the graph of $\pi_\lambda$, given at $\xi\in
C^{-1}(\lambda)$ by
$$
\{ ((\pi_\lambda)^\sharp((\dd f)_\xi),(\dd f)_\xi)\,|\, f\in
C^\infty(C^{-1}(\lambda))\}.
$$
To conclude that they coincide, it is enough to check that
$L_{quot}$ is contained in the graph of $\pi_\lambda$, as both
vector-bundles have the same rank. In other words, it suffices to
show that for $X \in T_{(\sigma,\xi)}M_\lambda$ and $f\in
C^\infty(C^{-1}(\lambda))$ such that $i_X \omega_\lambda = \dd
p_\lambda^*f$ then
\begin{equation}\label{eq:tocheck}
\dd p_\lambda(X) = (\pi_\lambda)^\sharp((\dd f)_\xi).
\end{equation}

Let $\hat{f} \in C^\infty(\frakg^*)$ be any extension of $f$, so
that $f=\hat{f}\circ j_\lambda$. Then
$$
\iota_\lambda^* (i_X  \Omega) = i_X \iota_\lambda^*\Omega = i_X
\omega_\lambda = \dd p_\lambda^* j_\lambda^* \hat{f} =
\iota_\lambda^* \dd p^*\hat{f},
$$
which implies that
\begin{equation}\label{eq:piOmega}
i_X \Omega = \dd p^*\hat{f} + k \dd (p^*C)
\end{equation}
for some $k \in \mathbb{R}$. Denoting by $\pi_{\Omega}$ the Poisson
structure defined by $\Omega$, the fact that $p: G\times \frakg^*
\to \frakg^*$ is a Poisson map implies that
$$
\dd p (\pi_\Omega^\sharp \dd p^*C) = \pi_{\frakg^*}^\sharp (\dd
C)=0,
$$
since $C$ is a Casimir. It follows from \eqref{eq:piOmega} that
$X=\pi_\Omega^\sharp(\dd p^*\hat{f} + k \dd (p^*C))$, so
$$
\dd p (X) = \dd p ( \pi_\Omega^\sharp(\dd p^*\hat{f} + k \dd
(p^*C))) = \dd p ( \pi_\Omega^\sharp(\dd p^*\hat{f})) =
\pi_{\frakg^*}^\sharp(\dd \hat{f}).
$$
We conclude that \eqref{eq:tocheck} holds as a direct consequence of
$j_\lambda$ being a Poisson map and $X$ being tangent to
$M_\lambda$.
\end{proof}

\begin{remark}\label{rem:Ared}
It follows from Lemma~\ref{lem:Komega} that $L\cap K^\perp$ is
isomorphic to the distribution $K^\Omega\subseteq TM_{\lambda}$,
which can be identified with the action Lie algebroid $\frakg\ltimes
M_{\lambda}$ arising from the diagonal action of $G$ on
$M_{\lambda}$ by left multiplication on the first factor and the
coadjoint action on the second. By \eqref{eq:Ared}, one can check
that $A_{red}=K^\Omega/G$ is identified with the action Lie
algebroid $\frakg\ltimes C^{-1}(\lambda)$  (relative to the
coadjoint action). If $G$ is simply connected, the reduced groupoid
$\Grd_{red}$ agrees with the action Lie groupoid $G\ltimes
C^{-1}(\lambda)$, and $\omega_{red}$ coincides with
$\omega_{\lambda}$.
\end{remark}

For the proof of Proposition~\ref{prop:nonint}, consider the Lie
algebras $\mathfrak{su}(2)$ and $\mathbb{R}$, equipped with their
canonical bilinear forms $B_{\mathfrak{su}(2)}$ and
$B_{\mathbb{R}}$. With the usual identification
$\mathfrak{su}(2)\cong \mathbb{R}^3$, $B_{\mathfrak{su}(2)}$ agrees
with the euclidean inner product. We let $\frakg$ be the Lie-algebra
direct sum $\mathfrak{su}(2) \oplus \mathbb{R}$, equipped with the
bilinear form $B = B_{\mathfrak{su}(2)} - B_{\mathbb{R}}$. Then
$$
C^{-1}(1/2)=\{ (\xi,t) \in \mathbb{R}^3 \times \mathbb{R}\,|\,
\SP{\xi,\xi} = 1+ t^2 \}
$$
can be identified with $P= S^2\times \mathbb{R}$ in such a way that
$\pi_{1/2}$ agrees with $\pi_P$. Lemma~\ref{lem:pf} shows that
$M=\mathrm{SU}(2) \times \mathbb{R} \times C^{-1}(1/2)$ can be
equipped with a presymplectic form that pushes forward to $\pi_P$.
This concludes the proof of Proposition~\ref{prop:nonint}.

\section{Relation with the path-space model}\label{sec:pathspaces}

Given an integrable Lie algebroid $A$ over $M$, there is a canonical
model for the source-simply-connected Lie groupoid integrating it in
terms of paths in $A$ \cite{CF} (see also \cite{catfel,severa}): one
considers the space $\widetilde{P}(A)$ of all $C^1$ paths $a: I\to
A$ from the interval $I=[0,1]$ into $A$ so that the projected path
$q_A\circ a:I\to M$, where $q_A:A \to M$ is the vector-bundle
projection, is of class $C^2$. This space has the structure of a
Banach manifold. We let $P(A)$ be the submanifold of
\textit{$A$-paths}, equipped with the equivalence relation given by
\textit{$A$-homotopy}, denoted by $\sim$, see \cite{CF}. Then the
quotient
$$
\Grd(A) := P(A)/\sim
$$
is a source-simply-connected Lie groupoid integrating $A$. This
explicit model was used in the study of Poisson actions in
\cite{FOR}. We now briefly explain how it relates to our approach
via closed IM-forms.

Let us consider the set-up described in $(i)$, $(ii)$, $(iii)$ of
Section ~\ref{subsec:setup}, let $J_A: A\to T^*M$ be as in
\eqref{eq:JA} and $J:\Grd(A)\to \frakg^*$ be its global counterpart,
see Prop.~\ref{prop:lifted}.

\begin{proposition}\label{prop:formula}
The map $J:\Grd(A)\to \frakg^*$ is given by
\begin{equation}\label{eq:Jformula}
\SP{J([a]),u} = \int_I \SP{\mu(a(t)),u_M|_{q_A(a(t))}} dt,
\end{equation}
where $u\in \frakg$, $u_M$ is the associated infinitesimal generator
and $a \in P(A)$.
\end{proposition}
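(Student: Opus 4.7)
The plan is to exploit the path-space model of the integration of Lie-algebroid morphisms. Recall that, for a morphism of Lie algebroids $\phi:A\to B$ with $B$ integrable, the induced Lie-groupoid morphism $\Phi:\Grd(A)\to \Grd(B)$ is described at the level of path spaces by composition: if $a\in P(A)$, then $\Phi([a])$ is the class of the $B$-path $\phi\circ a$ in $P(B)/\!\sim$. We apply this to $\phi=J_A:A\to \frakg^*$, which is a Lie-algebroid morphism by Lemma \ref{lem:morphism}.

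First, I would identify explicitly the path-space description of the source-simply-connected integration of $\frakg^*$, viewed as an abelian Lie algebroid over a point. In this case a $\frakg^*$-path is just a $C^1$ map $\xi:I\to \frakg^*$, the anchor condition is vacuous, and the source-simply-connected integration is the abelian Lie group $(\frakg^*,+)$. I expect, and would verify, that the quotient map $P(\frakg^*)\to \frakg^*$ is given by the vector-valued Riemann integral
\begin{equation*}
\xi \;\longmapsto\; \int_I \xi(t)\,dt.
\end{equation*}
This is the expected degeneration of holonomy to ordinary integration in the abelian setting; it follows by checking that the $A$-homotopy relation of \cite{CF} reduces precisely to the kernel of this integration, and that concatenation of paths intertwines with addition of integrals, so that the induced map is a Lie-group morphism inverse to the exponential.

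Combining this with the path-space description of $J$ applied to $J_A$ gives $J([a]) = \int_I J_A(a(t))\,dt$. Pairing with $u\in \frakg$ and using $J_A = J_{can}\circ \mu$ together with the definition \eqref{eq:Jcan0} of $J_{can}$ yields
\begin{equation*}
\SP{J([a]),u} \;=\; \int_I \SP{J_{can}(\mu(a(t))),u}\,dt \;=\; \int_I \SP{\mu(a(t)),u_M|_{q_A(a(t))}}\,dt,
\end{equation*}
which is \eqref{eq:Jformula}.

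The only nontrivial step is the explicit identification of the path-space integration for the abelian target $\frakg^*$; this is a direct consequence of the definitions in \cite{CF}. If one prefers to avoid invoking that identification, an equivalent approach is to define $\widetilde{J}:\Grd(A)\to \frakg^*$ by the right-hand side of \eqref{eq:Jformula}, check directly that the integral is invariant under $A$-homotopy (so $\widetilde{J}$ descends to $P(A)/\!\sim$), that it satisfies the cocycle condition \eqref{eq:Jcocycle} because concatenation of $A$-paths corresponds to the sum of integrals, and that its infinitesimal counterpart at units recovers $J_A$; uniqueness of integration of Lie-algebroid morphisms then forces $\widetilde{J}=J$.
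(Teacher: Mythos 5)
Your proposal is correct and follows essentially the same route as the paper: it invokes the path-space description of integrated Lie-algebroid morphisms (the content of Lemma~\ref{lem:compat}), identifies the quotient map $P(\frakg^*)\to \Grd(\frakg^*)=\frakg^*$ with $\xi\mapsto \int_I \xi(t)\,dt$, and then composes with $J_A=J_{can}\circ\mu$ and \eqref{eq:Jcan0}. The extra elaborations (verifying the abelian degeneration, and the alternative uniqueness-of-integration argument) are consistent with, and slightly more detailed than, the paper's terse treatment.
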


When $A$ is defined by a Dirac structure $L\subset TM\oplus T^*M$,
one replaces $\mu$ by $\pr_{T^*}$ in the formula
\eqref{eq:Jformula}; if $L$ is the graph of a Poisson structure, one
recovers the formula for $J$ in \cite[Thm.~3.3]{FOR}.

Proposition~\ref{prop:formula} follows from a more general
observation. Let $A_1\to M_1$ and $A_2\to M_2$ be Lie algebroids and
$\psi: A_1\to A_2$ be a vector-bundle map. We denote by
$\widetilde{\psi}: \widetilde{P}(A_1)\to \widetilde{P}(A_2)$ the
induced map on paths.

\begin{lemma}\label{lem:compat}
$\widetilde{\psi}$ takes $A_1$-paths to $A_2$-paths (i.e.,
$\widetilde{\psi}(P(A_1))\subseteq P(A_2)$) preserving $A$-homotopy
(i.e., $a \sim_{A_1} a'$ implies that $\widetilde{\psi}(a)\sim_{A_2}
\widetilde{\psi}(a')$ for all $a$, $a' \in P(A_1)$) if and only if
$\psi$ is a Lie-algebroid morphism.
\end{lemma}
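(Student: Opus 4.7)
The plan is to prove the two implications separately. The ``if'' direction amounts to a direct check, while the ``only if'' direction requires extracting the defining conditions of a Lie-algebroid morphism (anchor compatibility and bracket compatibility) from the preservation properties of $\widetilde{\psi}$.

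First I would establish the anchor condition $\rho_2\circ\psi=\dd f\circ\rho_1$, where $f:M_1\to M_2$ is the smooth map covered by $\psi$, from preservation of $A$-paths. Given any $v\in (A_1)_x$, choose a $C^2$ curve $\gamma:I\to M_1$ with $\gamma(0)=x$ and $\gamma'(0)=\rho_1(v)$, and build an $A_1$-path $a$ along $\gamma$ passing through $v$ at $t=0$ (this is straightforward: extend $v$ to any $C^1$ section-like map $a:I\to A_1$ over $\gamma$ satisfying the anchor-tangency condition $\rho_1(a(t))=\gamma'(t)$). Since $\widetilde{\psi}(a)=\psi\circ a$ is required to be an $A_2$-path over $f\circ\gamma$, reading the $A_2$-path condition $\rho_2(\psi\circ a)(t)=(f\circ\gamma)'(t)$ at $t=0$ gives exactly $\rho_2(\psi(v))=\dd f(\rho_1(v))$. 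Conversely, this same condition immediately implies that $\widetilde{\psi}$ sends $A_1$-paths to $A_2$-paths, since $(f\circ\gamma)'(t)=\dd f(\rho_1(a(t)))=\rho_2(\psi(a(t)))$ whenever $a$ is an $A_1$-path.

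Next I would extract the bracket compatibility from preservation of $A$-homotopy, using the explicit description of $A$-homotopies from \cite{CF}: two $A_1$-paths are homotopic precisely when there is a Lie-algebroid-valued $1$-form on the square $I\times [0,1]$ whose ``curvature''  vanishes, an equation that involves both the bracket and the anchor of $A_1$. On one hand, starting from a Lie-algebroid morphism, one verifies directly that pushing forward such a $1$-form by $\psi$ produces a Lie-algebroid-valued $1$-form on $I\times [0,1]$ satisfying the analogous flatness equation on the $A_2$-side --- this is the step where bracket compatibility of $\psi$ enters decisively. On the other hand, for the converse one tests the preservation property on specially constructed homotopies whose defining data encode the brackets of prescribed sections of $A_1$, forcing $\psi$ to intertwine these brackets with the brackets of the corresponding pushforwards in $A_2$.

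The main obstacle is the careful extraction of bracket compatibility from $A$-homotopy preservation in the ``only if'' direction, since the $A$-homotopy equation mixes the anchor and the bracket. A clean way to finesse this, should the explicit computations become unwieldy, is to invoke the universal property of the quotient $P(A)/\sim$ as the Weinstein groupoid $\Grd(A)$: the construction is functorial in the Lie-algebroid category, so a vector-bundle map $\psi:A_1\to A_2$ descends to a map on path-homotopy classes precisely when it is a Lie-algebroid morphism (when both $A_i$ are integrable one can even complete the argument through Lie's second theorem, but the path-space formulation works regardless of integrability).
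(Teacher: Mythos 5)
The paper states this lemma without proof (it is used only through its ``if'' direction, in the proof of Proposition~\ref{prop:formula}), so there is no argument of the authors to compare against; your proposal has to stand on its own. The ``if'' direction and the anchor half of the ``only if'' direction are fine: anchor compatibility is exactly equivalent to $\widetilde{\psi}(P(A_1))\subseteq P(A_2)$, by the argument you give (realize any $v\in (A_1)_x$ as $a(0)$ for the $A_1$-path $a(t)=\alpha(\gamma(t))$, with $\alpha$ a section through $v$ and $\gamma$ the integral curve of $\rho_1(\alpha)$), and composing a Lie-algebroid morphism $T(I\times I)\to A_1$ with a Lie-algebroid morphism $\psi$ yields a morphism $T(I\times I)\to A_2$, which is the homotopy-preservation statement.

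The genuine gap is in extracting bracket compatibility from the hypothesis as actually stated. The lemma assumes only that $\widetilde{\psi}$ preserves the \emph{equivalence relation}: $a\sim_{A_1}a'$ implies $\widetilde{\psi}(a)\sim_{A_2}\widetilde{\psi}(a')$. It does \emph{not} assume that $\psi$ applied to the data of a given $A_1$-homotopy satisfies the $A_2$-flatness equation --- only that \emph{some} $A_2$-homotopy between the image paths exists. So your plan of ``testing the preservation property on specially constructed homotopies whose defining data encode the brackets'' cannot be carried out as described: you never get to read the $A_2$-side flatness equation off the pushed-forward data. Your fallback is circular: functoriality of $\Grd(\cdot)$ (a Lie-algebroid morphism induces a map on homotopy classes) is precisely the ``if'' direction and says nothing about the converse. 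A workable route for the converse is different in character: since $\psi$ acts pointwise on paths it commutes with concatenation, so the hypotheses give a well-defined groupoid morphism $F:\Grd(A_1)\to\Grd(A_2)$ (or at least a morphism of the local groupoids, which exist without integrability); one then shows $F$ is smooth near the units by evaluating on short $A$-paths of the form $t\mapsto s\,\alpha(\gamma(st))$, identifies its induced Lie-algebroid map with $\psi$, and concludes from the fact that the Lie functor produces Lie-algebroid morphisms. Also note that since $\psi$ need not cover a diffeomorphism, ``bracket compatibility'' is the base-changing notion (equivalently, $\psi^*$ is a chain map for the algebroid differentials), which is another reason a naive bracket-of-sections computation does not directly apply.
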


If $\psi$ is a Lie-algebroid morphism, it follows that the map
$\widetilde{\psi}|_{P(A_1)}: {P}(A_1)\to {P}(A_2)$ descends to a map
$\Grd(A_1)\to \Grd(A_2)$, which is the groupoid morphism integrating
$\psi$.

\begin{proof}(of Prop.~\ref{prop:formula})
For a vector space $V$, thought of as a trivial Lie algebra (or a
trivial Lie algebroid over a point), $\widetilde{P}(V)=P(V)$ and the
quotient map $P(V)\to \Grd(V)=V$ is given by $a(t)\mapsto \int_I
a(t) dt$.

Considering the Lie-algebroid morphism $J_A = J_{can}\circ \mu : A
\to \frakg^*$, it follows that the composition of
$\widetilde{J}_A|_{P(A)}: {P}(A) \to {P}(\frakg^*)$ with
$P(\frakg^*)\to \Grd(\frakg^*)=\frakg^*$ is
$$
a(t) \mapsto \int_I J_A(a(t)) dt.
$$
By Lemma~\ref{lem:compat}, the map $J: \Grd(A)\to
\Grd(\frakg^*)=\frakg^*$ is given by $J([a(t)]) = \int_I J_A(a(t))
dt$, hence $\SP{J([a(t)]),u} = \int_I \SP{J_A(a(t)),u} dt = \int_I
\SP{\mu(a(t)),u_M|_{q_A(a(t))}}.$
\end{proof}

One can also use Lemma~\ref{lem:compat} and \cite{BuCa} to
generalize formula \eqref{eq:Jformula} to describe multiplicative
$k$-forms  in terms of the path-space model.


\end{document}